\definecolor{blue1}{rgb}{0,0,0.8}
\definecolor{hanul}{rgb}{0,0.5,0.8}
\DeclareMathAlphabet{\cmcal}{OMS}{cmsy}{m}{n}
\newtheorem{thm}{Theorem}[section]
\newtheorem{cor}[thm]{Corollary}
\newtheorem{lem}[thm]{Lemma}
\newtheorem{prop}[thm]{Proposition}
\newtheorem{conj}[thm]{Conjecture}
\theoremstyle{definition}
\newtheorem{defn}[thm]{Definition}
\theoremstyle{remark}
\newtheorem{rem}[thm]{\bf{Remark}}
\numberwithin{equation}{section} \numberwithin{table}{section}
\newtheorem*{thm*}{\bf{Theorem}}
\newtheorem*{claim*}{\bf{Claim}}
\newtheorem*{rem*}{\bf{Remark}}
\newtheorem*{rems*}{\bf{Remarks}}
\newtheorem*{exam*}{\bf{Example}}
\newtheorem*{exams*}{\bf{Examples}}
\newcommand{\F}{{\mathbb{F}}}
\newcommand{\bP}{{\mathbb{P}}}
\newcommand{\Q}{{\mathbb{Q}}}
\newcommand{\T}{{\mathbb{T}}}
\newcommand{\Z}{{\mathbb{Z}}}
\newcommand{\fa}{{\mathfrak{a}}}
\newcommand{\fb}{{\mathfrak{b}}}
\newcommand{\fh}{{\mathfrak{h}}}
\newcommand{\fm}{{\mathfrak{m}}}
\newcommand{\fn}{{\mathfrak{n}}}
\newcommand{\fp}{{\mathfrak{p}}}
\newcommand{\fr}{{\mathfrak{r}}}
\newcommand{\fR}{{\mathfrak{R}}}
\newcommand{\fS}{{\mathfrak{S}}}
\newcommand{\cA}{{\cmcal{A}}}
\newcommand{\cB}{{\cmcal{B}}}
\newcommand{\cC}{{\cmcal{C}}}
\newcommand{\cE}{{\cmcal{E}}}
\newcommand{\cI}{{\cmcal{I}}}
\newcommand{\cJ}{{\cmcal{J}}}
\newcommand{\cM}{{\cmcal{M}}}
\newcommand{\cN}{{\cmcal{N}}}
\newcommand{\cS}{{\cmcal{S}}}
\newcommand{\cT}{{\cmcal{T}}}
\newcommand{\cX}{{\cmcal{X}}}
\newcommand{\cY}{{\cmcal{Y}}}
\newcommand{\cZ}{{\cmcal{Z}}}
\def\a{\alpha}
\def\b{\beta}
\def\d{\delta}
\def\e{\epsilon}
\def\g{\gamma}
\def\k{\kappa}
\def\o{\omega}
\def\s{\sigma}
\def\u{\upsilon}
\def\p{\varphi}
\def\vp{\varpi}
\def\G{\Gamma}
\def\D{\Delta}
\def\<{\langle}
\def\>{\rangle}
\newcommand{\GQ}{{\operatorname{Gal}(\overline{\mathbb{Q}}/{\mathbb{Q}})}}
\newcommand{\inj}{\hookrightarrow}
\newcommand{\surj}{\twoheadrightarrow}
\newcommand{\arinj}{\ar@{^(->}}
\newcommand{\arsurj}{\ar@{->>}}
\newcommand{\arsub}{\ar@{}[r]|-*[@]{\subset}}
\newcommand{\arsup}{\ar@{}[r]|-*[@]{\supset}}
\newcommand{\arcap}{\ar@{}[d]|-*[@]{\subset}}
\newcommand{\arcup}{\ar@{}[u]|-*[@]{\subset}}
\renewcommand{\pmod}[1]{{~~(\operatorname{mod}~{#1})}}
\renewcommand{\mod}[1]{{~~\operatorname{mod}~{#1}}}
\newcommand{\Qbar}{\overline{\mathbb{Q}}}
\newcommand{\Res}{{\operatorname{Res}}}
\newcommand{\Gal}{{\operatorname{Gal}}}
\newcommand{\Div}{{\operatorname{Div}}}
\newcommand{\Pic}{{\operatorname{Pic}}}
\newcommand{\End}{{\operatorname{End}}}
\newcommand{\GL}{{\operatorname{GL}}}
\newcommand{\Frob}{{\text{Frob}}}
\newcommand{\old}{{\mathrm{old}}}
\newcommand{\new}{{\mathrm{new}}}
\newcommand{\tr}{{\operatorname{tr}}}
\newcommand{\modl}{{\pmod {\ell}}} 
\mathchardef\hyp="2D
\newcommand{\sqf}{{\mathrm{sf}}}
\newcommand{\sqq}{{\mathrm{sq}}}
\newcommand{\rrad}{{\mathrm{rad}}}
\newcommand{\mat}[4]{
 \left(  \begin{smallmatrix} #1 & #2 \\ #3 & #4 \end{smallmatrix} \right)}
\newcommand{\bmat}[4]{
  \left( \begin{array}{cc} #1 & #2 \\ #3 & #4 \end{array} \right)}
\newcommand{\vect}[2]{
 \left(  \begin{smallmatrix} #1 \\ #2 \end{smallmatrix} \right)}
\newcommand{\br}[1]{\langle #1 \rangle}
\newcommand{\ms}{\hspace*{1cm}}
\newcommand{\zmod}[1]{{\Z/{#1}\Z}}
\newcommand{\exclude}[1]{}
\newcommand{\qa}{\quad\text{and}\quad}
\renewcommand{~}{\hspace{0.5mm}}
\newcommand{\bd}[1]{\boldsymbol{\mathrm{#1}}}
\renewcommand{\ms}{\vspace{2mm}}
\newcommand{\red}[1]{{\color{black} #1}}
\begin{document}                                                                          

\title[On rational Eisenstein primes and the rational cuspidal groups]{On rational Eisenstein primes and the rational cuspidal groups of modular Jacobian varieties}
\author{Hwajong Yoo}
\address{Center for Geometry and Physics, Institute for Basic Science (IBS), Pohang, Republic of Korea 37673}
\email{hwajong@gmail.com}
\subjclass[2010]{11F33, 11F80, 11G18}
\keywords{Cuspidal group, Eisenstein ideals}

\begin{abstract}
Let $N$ be a non-squarefree positive integer and let $\ell$ be an odd prime such that $\ell^2$ does not divide $N$. 
Consider the Hecke ring $\mathbb{T}(N)$ of weight $2$ for $\Gamma_0(N)$, and its rational Eisenstein primes of $\mathbb{T}(N)$ containing $\ell$, defined in Section \ref{sec: rational Eisenstein primes}. If $\mathfrak{m}$ is such a rational Eisenstein prime, then we prove that 
$\mathfrak{m}$ is of the form $(\ell, ~\cmcal{I}^D_{M, N})$, where the ideal $\cmcal{I}^D_{M, N}$ of $\mathbb{T}(N)$ is also defined in Section \ref{sec: rational Eisenstein primes}. 
Furthermore, we prove that $\cmcal{C}(N)[\mathfrak{m}] \neq 0$, where $\cmcal{C}(N)$ is the rational cuspidal group of $J_0(N)$. 
To do this, we compute the precise order of the cuspidal divisor $\cmcal{C}^D_{M, N}$, defined in Section \ref{sec: the cuspidal divisor}, and the index of $\cmcal{I}^D_{M, N}$ in $\mathbb{T}(N)\otimes \Z_\ell$.
\end{abstract}
\maketitle
\setcounter{tocdepth}{1} 
\tableofcontents

\section{Introduction}
Let $N$ be a positive integer. Consider the modular curve $X_0(N)$ of level $N$ over $\Q$. This curve is endowed with a Hecke correspondence $T_p$ for each prime number $p$. 
The correspondence $T_p$ induces an endomorphism of the Jacobian variety $J_0(N):=\Pic^0(X_0(N))$ of $X_0(N)$, which is again denoted by $T_p$. Let $\T(N)$ be the $\Z$-subalgebra of $\End(J_0(N))$ generated by the family of endomorphisms $T_n$ for all integers $n\geq 1$.

Suppose that $\fm$ is a maximal ideal of $\T(N)$, and let $\ell$ be the characteristic of $\T(N)/\fm$. Let
$$
\rho_{\fm} : \GQ \rightarrow \GL_2(\T(N)/\fm)
$$
be the two-dimensional semisimple representation attached to $\fm$ (cf. \cite[\textsection 5]{R90}). Up to isomorphism, this representation is characterized by the following:
\begin{itemize}
\item it is unramified outside $\ell N$;
\item for each prime $p$ not dividing $N$, the characteristic polynomial of $\rho_{\fm}(\Frob_p)$ is
$$
X^2-T_p \pmod \fm X+ p,
$$ 
where $\Frob_p$ is a Frobenius element for a prime $p$ in $\GQ$.
\end{itemize}
If $\rho_{\fm}$ is reducible, $\fm$ is called an \textsf{Eisenstein prime} or \textsf{Eisenstein}. 

\ms
The study of Eisenstein primes goes back to the seminal paper \cite{M77} by Mazur.
For prime $N$, he introduced the \textsf{Eisenstein ideal}
$$
\cI_0(N):=(T_p-p-1 : \text{ for primes $p$ not dividing $N$}) \subseteq \T(N).
$$
This ideal then contains $T_N-1$ as well, and it annihilates 
all ``Eisenstein modules'', for example
\begin{itemize}
\item
the \textsf{cuspidal group} $\cC_N$, the group generated by the equivalence classes of degree $0$ divisors supported only on cusps;
\item
the \textsf{rational torsion subgroup} $\cT(N)$, the group of the elements of finite order in $J_0(N)(\Q)$;
\item
the \textsf{component group} of the special fiber of the N\'eron model $\cJ_0(N)$ of $J_0(N)$ over $\F_N$;
\item
the \textsf{Shimura subgroup} $\Sigma_N$, the kernel of the natural map $J_0(N) \rightarrow J_1(N)$, obtained from the natural covering $X_1(N) \rightarrow X_0(N)$.
\end{itemize} 
In fact, the quotient $\T(N)/{\cI_0(N)}$ is cyclic of order $n$, where $n$ is the numerator of $\frac{N-1}{12}$, and the orders of all the groups above are precisely $n$. If a prime number $\ell$ divides $n$, then the ideal $\fm=(\ell, ~\cI_0(N))$ is Eisenstein. Moreover, Mazur showed that $\dim_{\F_\ell} J_0(N)[\fm] =2$, where
\[
J_0(N)[\fm] := \{ x \in J_0(N)(\overline{\Q}) : Tx =0 \text{ for all } T \in \fm \}.
\]

\vspace{2mm}
On the other hand, the study of Eisenstein modules is rather complicated when $N$ is no longer  prime, due to the presence of old forms. Also, the orders of the groups above are different in general.
Nevertheless we can prove the following theorem if the level is squarefree.

\begin{thm}
Let $N$ be a squarefree positive integer, and let
$\fm \subseteq \T(N)$ be an Eisenstein prime containing $\ell$. Then, the following hold.
\begin{enumerate}
\item
The ideal $\fm$ always contains
$$
\cI_{M, N}:=\left(T_p-1, ~T_q-q, ~\cI_0(N) : \text{ for all primes } p \mid M \text{ and } q \mid \frac{N}{M} \right)
$$ 
where $M$ is a divisor of $N$. If we take the largest such number $M$ (as we will do henceforth), then we have $M \neq 1$. 

\item
If $M\neq 1$, then the quotient $\T(N)/{\cI_{M, N}}$ is cyclic of order $n$ up to powers of $2$, where $n$ is the numerator of 
$$
\frac{1}{24} \prod_{p \mid M} (p-1) \prod_{q \mid \frac{N}{M}} (q^2-1),
$$ 
where $p$ and $q$ are prime. In other words, we have 
$$\T(N)/{\cI_{M, N}} \otimes \Z[1/2] \simeq \zmod n \otimes \Z[1/2].$$

\item
If $\fm$ is new, then $q \equiv -1 \modl$ for all prime divisors $q$ of $N$ not dividing $M$.
\item
We have $\cC_N[\fm] \neq 0$.

\item
Let $\ell$ be a prime not dividing $6N$.
Assume that $q \equiv -1 \modl$ for all prime divisors $q$ of $N$ not dividing $M$. 
Let $s$ and $t$ be the numbers of prime divisors of $M$ and $\frac{N}{M}$, respectively.
Let $s_0$ be the number of prime divisors $p$ of $N$ such that $p \equiv 1 \modl$. If either $s_0 \neq s$ or $t=0$, then
$$
\dim J_0(N)[\fm]=1+s_0+t+\d, 
$$
where $\d=s$ (resp. $0$) if $s_0=s$ (resp. otherwise).

\item (Generalized Ogg's conjecture)
We have $\cT(N)[\ell^\infty] = \cC_N[\ell^\infty]$ for a prime $\ell \geq 5$, where $\cC_N[\ell^\infty]$ is the $\ell$-primary part of $\cC_N$. If $3$ does not divide $N$, then $\cT(N)[3^\infty] = \cC_N[3^\infty]$.
\end{enumerate}
\end{thm}

The first two assertions were proved by the author \cite[\textsection 2]{Yoo3}. The third one is well-known (Lemma \ref{lem:p^2 U_p eigenvalue}).
The fourth one was conjectured by Ribet, and was proved by the author (\textit{loc. cit.}).  
The fifth one was proved by Ribet and the author \cite{RY14}. 
The last one was proved by Ohta \cite{Oh14}. (If $N$ is the product of two distinct primes, then the result was slightly improved by the author \cite{Yoo5}.)

\vspace{2mm}
In this paper, we generalize some of the results above to the case of non-squarefree level. There are several discrepancies between two cases. First of all, if the level is non-squarefree, then there are some cuspidal divisors which are not rational and hence the sixth assertion above must be modified. Next, there are some Eisenstein primes which do not contain $\cI_0(N)$. (In fact, if $\ell$ does not divide $N$, $\rho_{\fm}$ is isomorphic to $\alpha \oplus \chi_{\ell} \cdot \alpha^{-1}$, where $\alpha$ is a character of $\GQ$ unramified outside $N$ and $\chi_{\ell}$ is the mod $\ell$ cyclotomic character, by Faltings and Jordan \cite[Th. 2.3]{FJ95}. The possible characters $\alpha$ can be studied by a method of Mazur \cite[\textsection 5]{M78}, and we can also construct such non-trivial characters explictly. In this direction, see also the paper of Stevens \cite{Ste85}.) Finally, if $\ell^2$ divides $N$, then the geometry of the special fiber of the N\'eron model of $J_0(N)$ at $\ell$ is rather difficult.

One possible generalization of the statements above to the case of  non-squarefree level is to impose the condition of ``rationality''. More precisely, we say
that an Eisenstein prime $\fm$ of $\T(N)$ is \textsf{rational Eisenstein} if $\rho_{\fm} \simeq \mathbbm{1} \oplus \chi_{\ell}$, where $\mathbbm{1}$ is the trivial character, or equivalently it contains $\cI_0(N)$. 
The intersection $\cC(N):=\cC_N \cap \cT(N)$ is called the \textsf{rational cuspidal group of $J_0(N)$}. It can also be defined as the subgroup generated by equivalence classes of degree $0$ cuspidal divisors stable under the action of $\GQ$. (By a theorem of Manin and Drinfeld \cite{Dr73, Ma72}, the points on $J_0(N)$ defined by such classes are of finite order, and hence they are contained in $\cT(N)$.)
Let us remark that if $\cT(N)[\fm] \neq 0$, then $\fm$ is rational Eisenstein by the Eichler--Shimura relation and a standard argument using the Brauer--Nesbitt Theorem and Chebotarev density theorem (cf. \cite[Ch. II, \textsection 14]{M77}).

The following conjecture is an optimistic generalization of the above theorem to non-squarefree $N$. See Section \ref{sec : notation} for unfamiliar notation.
\begin{conj}\label{conjecture}
Let $N$ be a non-squarefree positive integer and let $\fm \subseteq \T(N)$ be a rational Eisenstein prime containing $\ell$. Then, the following hold.
\begin{enumerate}
\item
The ideal $\fm$ always contains the ideal
$$
\cI_{M, N}^D:=\left (T_p-1, ~T_q-q,~T_\fp, ~\cI_0(N) : \text{ for primes } p \mid M, ~q \mid \frac{N^{\rrad}}{MD}, ~\fp \mid D \right )
$$
where $M$ is a divisor of $N^{\rrad}$ and $D$ is a divisor of $N^{\sqq}$ prime to $M$. If we take the largest such numbers $M$ and $D$ (as we will do henceforth), then we have $MD \neq 1$.

\item
If $MD\neq 1$, then
the index of $\cI_{M, N}^D$ is equal to the order of some cuspidal divisor up to powers of $2$.

\item
If $\fm$ is new, then $D=N^{\sqq}$, and $q\equiv -1 \modl$ for all prime divisors $q$ of $N^\sqf$ not dividing $M$.
\item
We have $\cC(N)[\fm] \neq 0$.

\item
Let $\ell$ be a prime not dividing $6N$.
Assume that $D=N^{\sqq}$ and $q \equiv -1 \modl$ for all prime divisors $q$ of $N^\sqf$ not dividing $M$.
Let $s_0$ and $u_0$ be the numbers of prime divisors of $M$ and $N^{\sqq}$ which are  congruent to $1$ modulo $\ell$, respectively. Also, let $u_1-u_0$ be the number of prime divisors of $N^{\sqq}$ which are congruent to $-1$ modulo $\ell$. Then, we have
\begin{equation*}
\dim J_0(N)[\fm]=1+s_0+t+u_1,
\end{equation*}
where $t$ is the number of prime divisors of $N^\sqf$ not dividing $M$.

\item
(Generalized Ogg's conjecture) We have $\cT(N)=\cC(N)$.
\end{enumerate}
\end{conj}

The fifth assertion was studied by the author in \cite{Yoo7}; in particular, it was proved
if either $s_0 \neq s$ or $t=0$ or $u_0 \neq u$, where $s$ and $u$ are the numbers of the prime divisors of $M$ and $N^{\sqq}$, respectively. The last assertion was considered by Lorenzini in \cite{Lo95} when the level is a power of a prime greater than $3$.
In this paper, we will discuss the remaining assertions. More precisely, we have the following.
\begin{thm} \label{thm : main thm}
Let $\fm \subseteq \T(N)$ be a rational Eisenstein prime containing $\ell$. Suppose that $\ell$ is odd and $\ell^2$ does not divide $N$. Then, the following hold.
\begin{enumerate}
\item
The ideal $\fm$ is of the form $(\ell,~\cI_{M, N}^D)$ for some divisor $M$ of $N^{\rrad}$ and some divisor $D$ of $N^{\sqq}$ prime to $M$. If we take the largest such numbers $M$ and $D$ (as we will do henceforth), then we have $MD\neq 1$.

\item
If $MD \neq 1$, then we have
$$\T(N)/{\cI_{M, N}^D} \otimes \Z_\ell \simeq \zmod n \otimes \Z_\ell, $$
where $n$ is the order of the cuspidal divisor $\cC_{M, N}^D$.

\item
If $\fm$ is new, then $D=N^{\sqq}$ and $q \equiv -1 \modl$ for all prime divisors $q$ of $N^\sqf$ not dividing $M$.

\item
We have $\cC(N)[\fm] \neq 0$.

\end{enumerate}
For $\ell=2$, if $N$ is not divisible by $4$ and $N^{\rrad}/M$ is an odd integer greater than $1$, then the same results also hold true.
\end{thm}

\noindent (For the definition of $\cC_{M, N}^D$, see Section \ref{sec: the cuspidal divisor}.) 

\red{\begin{rem}
Let $\ell$ be a prime not dividing $6N^{\sqq}$ and let $\fm=(\ell, \cI_{M, N}^D)$. 
Assume that $D=N^{\sqq}$ and $q\equiv -1 \pmod \ell$ for all prime divisors $q$ of $N^\sqf$ not dividing $M$ (as in the third statement).
Then, by the assertions (1) and (2) above (and the computation of $n$ in Theorem \ref{thm : order of C_M, N}) we have
\begin{equation*}
\text{$\fm$ is maximal} \iff s+u\geq 1 \text{ and } s_0+t+u_1 \geq 1
\end{equation*}
where $s_0, s, t, u_0, u_1$ and $u$ are defined in Conjecture \ref{conjecture}(5).
\end{rem}}

\begin{proof}[Proof of Theorem \ref{thm : main thm}]
The first and third assertions are proved in Theorem \ref{thm : classification of Eisenstein ideals}. The second one is proved in Theorem \ref{thm : the index}. The last one easily follows from the first and second ones because $\cC_{M, N}^D$ is annihilated by $\cI_{M, N}^D$ (Theorem \ref{thm : hecke action on C_M, N}).
\end{proof}
The idea of the proof of the second assertion (Theorem \ref{thm : the index}) is as follows. Let $\fm=(\ell, ~\cI_{M, N}^D)$ be a rational Eisenstein prime.
First, we study a rational cuspidal divisor $\cC_{M, N}^D$, which is annihilated by $\cI_{M, N}^D$. More precisely, we compute the precise order of $\cC_{M, N}^D \in J_0(N)(\Q)$ (Theorem \ref{thm : order of C_M, N} and Corollary \ref{cor : the order of C_M, N}) and the action of the Hecke operators on it (Theorem \ref{thm : hecke action on C_M, N}). This gives a lower bound of the index of $\cI_{M, N}^D$.  
Then, we define an Eisenstein series $\cE_{M, N}^D$ and compute its residues at various cusps (Lemma \ref{lem : 5.8 new}). This gives an upper bound of the index of $\cI_{M, N}^D$. Fortunately, these two bounds are ``$\ell$-adically'' equal and imply the result.

\ms
Before giving an application, we discuss a useful heuristic that works very well in the study of Eisenstein modules. Let $\a_p(N)$ and $\b_p(N)$ be two degeneracy maps from $X_0(Np)$ to $X_0(N)$ defined by ``forgetting the level $p$ structure'' and ``dividing by the level $p$ structure'', respectively. (See Section \ref{sec : degeneracy maps} for the precise definitions.)
Let $\alpha_p(N)^*, \beta_p(N)^* : J_0(N) \to J_0(Np)$ be the pullbacks of $\a_p(N), \b_p(N)$ via Picard functoriality.
Also, let $[N]^{\bigstar}_p$ be the maps from $J_0(N)$ to $J_0(Np)$ defined by
$$
\begin{cases}
[N]^+_p(x):= \alpha_p(N)^*(x)-\beta_p(N)^*(x),\\
[N]^-_p(x):=p \cdot \alpha_p(N)^*(x)-\beta_p(N)^*(x),\\
[N]_p(x):=\alpha_p(N)^*(x).
\end{cases}
$$
Similarly, we define the maps $[N]_p^\bigstar$ from the space of modular forms of weight $2$ and level $N$ to the space of those of level $Np$. 
Suppose that $N=1$. Then, there is only one cusp $i\infty$, and there is a ``fake'' modular form  
$\cE_0$ whose $q$-expansion at $i\infty$ is
\[
1- 24\sum_{n \geq 1} \sigma(n) \cdot q^n,
\]
where $\sigma(n)$ is the sum of all (positive) divisors of $n$ (cf. \cite[p. 78]{M77}). Note that the residue of $\cE_0$ at $i\infty$ is $1$, which is clear from its $q$-expansion.
Then, for a prime $N$, $[1]_N^+(\cE_0)$ is an Eisenstein series (but $[1]_N^-(\cE_0)$ is not). Similarly, (formally)\footnote{Indeed, $[1]_N^+(i\infty)=(N-1)\cdot \cC_{N, N}$, which is of degree $0$. However $[1]_N^+(i\infty)$ is zero in $J_0(N)$ because $(N-1)$ is a multiple of the order of $\cC_{N, N}$. So, to find the ``correct'' cuspidal divisor $\cC_{N, N}$ via an inductive method, we have to divide $[1]_N^+(i\infty)$ by the greatest common divisor of the coefficients.} $[1]_N^+(i\infty)$ is a divisor of degree $0$ (but $[1]_N^-(i\infty)$ is not). 
For the case of level $N^2$, we can obtain (formal) Eisenstein series $[N]^\star_N \circ [1]_N^\star(\cE_0)$, and (formal) cuspidal divisors $[N]^\star_N \circ [1]_N^\star (i\infty)$. By a similar argument as in the case of  level $N$, some are genuine Eisenstein series and some are divisors of degree $0$; and others are not\footnote{If an Eisenstein series under consideration is indeed a modular form then the sum of the residues must be $0$.}. This inductive method can produce all interesting Eisenstein series and cuspidal divisors of arbitrary level, which are stable under the action of the Hecke operators. (This is not just a heuristic, because we can start this process from prime level, which we will do in  Sections \ref{sec: the cuspidal divisor} and \ref{sec : Eisenstein series}.) The merit of this method is clear. We can obtain various information about the Eisenstein modules (e.g. the orders of cuspidal divisors and the residues of Eisenstein series) by this kind of inductive argument\footnote{The computation of the orders of cuspidal divisors in this paper is easier than the previous method of \cite[\textsection 3]{Yoo3}.
More specifically, the most difficult part of the computation is to find the inverse of the matrix $\Lambda(N)$. In \cite[\textsection 3]{Yoo3}, we introduced auxiliary notions to do it. Here, we instead compute $\Lambda(N)^{-1}$ by this inductive method, which also clarifies the previous computation.}. Furthermore, if we use an ``appropriate'' map $[N]_p^\bigstar$ in each inductive step, we can easily prove that they are annihilated by a certain Eisenstein ideal (see Proposition \ref{prop : the maps [p]} and Remark \ref{rem : eigenvalue}).

\ms
As an application, we prove the following.
\begin{thm}\label{thm:corollary}
Let $M$ be a divisor of $N^\sqf$ such that $M\cdot N^\sqq \neq 1$. Let $p$ be a prime divisor of $N$, and let $M'={M}/{\gcd(M, p)}$.
Let $[N]_p^\bullet$ be the map from $J_0(N)$ to $J_0(Np)$, and let $\cN$ be the number defined as follows.
\begin{equation*}
([N]_p^\bullet, \cN):=\begin{cases}
([N]_p^-, p+1)& \text{ if } p \mid M,\\
([N]_p^+, 1)& \text{ if } p \mid \frac{N^\sqf}{M},\\
([N]_p, p)& \text{ if } p^2 \mid N.\\
\end{cases}
\end{equation*}
Then, we have $[N]_p^\bullet(\cC_{M, N})=\cN \cdot \cC_{M', Np}$. If we let $K$ denote the intersection of the kernel of $[N]_p^\bullet$ and $\< \cC_{M, N} \>$, then we have $K \simeq \< \cC_{M, N} \>[h]$, where $h$ is either $1$ or $2$. Furthermore, $h=2$ if and only if $(M, N)=(q, q)$ or $(q, 2q)$ for $q$ an odd prime $q$. In other words, we have
\begin{equation*}
 K=\begin{cases}
 \< \cC_{M, N} \>[2] & \text{ if  either } (M, N)=(q, q) \text{ or } (q, 2q) \text{ for an odd prime }q,\\
 \quad 0  & \text{ otherwise}.
 \end{cases}
 \end{equation*}
Thus, for an odd prime $\ell$, we have
\begin{equation*}
[N]_p^\bullet (\br {\cC_{M, N}}[\ell])=\br {\cC_{M', Np}}[\ell].
\end{equation*}

\end{thm}
\noindent (Here, $\cC_{M, N}$ and $\cC_{M', Np}$ denote $\cC_{M, N}^{N^{\sqq}}$ and $\cC_{M', Np}^{(Np)^\sqq}$, respectively. For the proof, see Section \ref{sec : the map [N] on the cuspidal divisors}.) 

If $p$ does not divide $N$, then this is an easy consequence of a result proved by Ribet \cite{R83}. Indeed, when $p$ does not divide $N$ he proved that the kernel of the map 
$$
\gamma_p(Np)^* : J_0(N) \times J_0(N) \rightarrow J_0(Np)
$$
is the antidiagonal image of the Shimura subgroup of $J_0(N)$. (Also, the intersection of the cuspidal group and the Shimura group is at most of order $2$, cf. \cite[Ch. II, Prop. 11.11]{M77}.)
On the other hand, if $p$ divides $N$, then not much is known about the kernel of $\gamma_p(Np)^*$. (Some cases were done by Ling \cite{Lg95}.) The theorem above says that at least under an (appropriate) map $[N]^{\bigstar}_p$, the subgroup $\br {\cC_{M, N}}[\ell] \subseteq J_0(N)[\fm]$ maps injectively into $J_0(Np)[\fm']$ (where $\fm'$ is compatible with $\fm$). This result is used crucially in \cite{Yoo7} in order to compute the dimension of
$$
J_0(N)[\fm] := \{ x \in J_0(N)(\Qbar) : Tx = 0 \quad\text{for all }~ T \in \fm \}
$$
for non-squarefree $N$.

\subsection*{Acknowledgements} The author would like to thank Ken Ribet for very helpful discussions. He would also like to thank Jinhyun Park for helpful comments. 
He is grateful to the referee for supplying many helpful
comments and suggestions that helped him to improve some results.
This work was supported by IBS-R003-D1.

\ms
\subsection{Notation}\label{sec : notation}
Throughout this paper, $p$ and $\ell$ are always prime numbers. 
Also, $N$ is always a positive integer. By a divisor of $N$ we always mean a {\it positive} divisor of $N$, and we denote by $\cS(N)$ the set of the (positive) divisors of $N$. We denote by $\o(N)$ the number of prime divisors of $N$. We denote by $\p$ Euler's totient function, i.e., 
\begin{equation*}
\p(N):=N\cdot \prod_{p \mid N} (1-\frac{1}{p}).
\end{equation*}
For a positive integer $d$, we denote by $\nabla(d)$ the set of positive integers prime to $d$, i.e.,
\begin{equation*}
\nabla(d):=\{ x \in \Z : x \geq 1 \text{ and } \gcd(x, d)=1 \}.
\end{equation*}

We denote by $p^r \mid \mid N$ if $p^r$ divides $N$ but $p^{r+1}$ does not. In such a case, we say that \textsf{$p^r$ exactly divides $N$}, and denote by 
$\mathrm{val}_p(N):=r$ the (normalized) valuation of $N$ with respect to $p$.
We also denote by
$$
N^\sqf := \prod_{\mathrm{val}_p(N)=1} p, \quad N^\sqq := \prod_{\mathrm{val}_p(N) \geq 2} p \quad\text{and}\quad N^{\rrad}:=\prod_{p \mid N} p = N^\sqf \cdot N^\sqq.
$$

For a positive integer $N$, we define two sets $\fS(N)$ and $\fS_0(N)$ of pairs
$(M, D)$ consisting of certain divisors of $N$ as follows.
\begin{equation}\label{eqn : the set fS(N) }
\begin{split}
\fS(N)&:=\{(M, D) : \text{$M$ is a divisor of $N^{\rrad}$ and $D$ is a divisor of $N^{{\sqq}}$ prime to $M$} \}, \\
\fS_0(N)&:=\{ (M, D) \in \fS(N) : MD\neq 1 \}.
\end{split}
\end{equation}
For $(M, D) \in \fS_0(N)$, we set the number $h(M, N, D)=1$ unless one of the following holds, in which case we set $h(M, N, D)=2$:
\begin{enumerate}[(i)]
\item
$N=M$ is a prime (and so $D=1$).
\item
$N=2^k M$ with $M$ an odd prime for $k\geq 1$, and $D=1$.
\item
$M=1$, $N=2^k$ for $k\geq 2$ and $D=2$.
\end{enumerate} 
For $(M, D) \in \fS(N)$, we define
\begin{equation*}
a_p(M, N, D):=\begin{cases}
p+1 & \text{ if $p$ does not divide $N$}, \\
1 & \text{ if $p$ divides $M$}, \\
0 & \text{ if $p$ divides $D$}, \\
p & \text{ otherwise}. 
\end{cases}
\end{equation*}

For an integer $r\geq 0$, we set 
\begin{equation*}
\cX_p(r):=p^{r-1}(p-1), ~~\cY_p(r):=p^{r-1}(p^2-1), ~~\text{ and } \cZ_p(r):=p^{r-2}(p^2-1).
\end{equation*}
Then, for $(M, D) \in \fS(N)$, we define the number
\begin{equation}
\begin{split}
\cN(M, N, D):&=\prod_{p \mid M} \cX_p(e(p)) \cdot \prod_{p \mid \frac{N^{\rrad}}{MD}} \cY_p(e(p)) \cdot 
\prod_{p \mid D} \cZ_p(e(p)) \\
&=\frac{N}{N^{\rrad} \cdot D} \cdot \prod_{p \mid M} (p-1) \cdot \prod_{p \mid \frac{N^{\rrad}}{M}} (p^2-1) ,
\end{split}
\end{equation}
where $e(p)=\mathrm{val}_p(N)$.

\ms
\section{Background}\label{sec : Background}
In this section, we review well-known results on cusps of $X_0(N)$ and the action of the Hecke operators on $J_0(N)$. 
Throughout this section, we fix the notation, $r:=\mathrm{val}_p(N)$.

\subsection{The degeneracy maps} \label{sec : degeneracy maps}
Let $\alpha_p(N) : X_0(Np) \rightarrow X_0(N)$ be the degeneracy covering with the modular interpretation $(E, C) \mapsto (E, C[N])$, where $C$ is a cyclic subgroup of order $Np$ in an elliptic curve $E$. Let $\beta_{p}(N)$ be the ``other'' degeneracy covering $X_0(Np) \rightarrow X_0(N)$; it has the modular interpretation $(E, C) \mapsto (E/{C[p]}, C/C[p])$. 
If we identify the set of complex points on $X_0(Np)$ with $\Gamma_0(Np) \backslash (\fh \cup \bP^1(\Q))$, where $\fh$ is the complex upper half plane, 
then for $\tau \in \fh \cup \bP^1(\Q)$ we have
\begin{equation}\label{eqn : alpha beta on h}
\begin{cases}
\alpha_p(N)(\tau) \equiv \,\,\tau ~\pmod {\Gamma_0(N)},\\
\beta_p(N)(\tau) \equiv \,p\tau \pmod {\Gamma_0(N)}
\end{cases}
\end{equation}
(cf. \cite[\textsection 12.1]{RS97}).
The degeneracy coverings $\alpha_{p}(N)$ and $\beta_{p}(N)$ have degree $p$ if $r\geq 1$, and degree $p+1$ if $r=0$. They induce the maps 
$$
\alpha_{p}(N)_*, \beta_{p}(N)_* : J_0(Np) \rightrightarrows J_0(N), 
\quad\quad \alpha_{p}(N)^*, \beta_{p}(N)^* : J_0(N) \rightrightarrows J_0(Np)
$$
via the two functorialities of the Jacobian.

\ms
\subsection{The cusps of $X_0(N)$}\label{sec : the cusp}
The elements in $X_0(N) \setminus Y_0(N)$ are called the {\sf cusps} of $X_0(N)$. They can be regarded as the equivalence classes in $\bP^1(\Q)$ modulo $\Gamma_0(N)$. Following Ogg \cite{Og73}, for $a \in \nabla(b)$ we denote by $\vect a b$ the cusp $a/b \in \G_0(N) \backslash \bP^1(\Q)$. 
(So, we say that $\vect a b$ \textsf{is} a cusp of $X_0(N)$.)
We write $\vect a b = \vect c d$ if and only if
they are equal as cusps of $X_0(N)$, i.e., $\G_0(N)(a/b)=\G_0(N)(c/d)$.
We often allow the case $\gcd(a, b)=d\neq 1$ below for convenience and we will identify $\vect a b$ with $\vect {a/d} {b/d}$.
By \cite[Prop. 3.8.3]{DS05}, we have
\begin{equation}\label{eqn : 2.2  equivalence}
\vect a b = \vect c d \iff \vect {ya}{b} \equiv \vect {c+jd}{yd} \pmod {N},
\end{equation}
where $j$ can be any integer and $y$ is any integer prime to $N$. Therefore a cusp $\vect a b$ of $X_0(N)$ with $a \in \nabla(b)$ is equal to $\vect x d$ for some divisor $d$ of $N$ and $x \in \nabla(d)$. (Indeed, we can take $d=\gcd(b, N)$.)

\ms
Let $\d$ be a divisor of $N$. For an integer $x \in \nabla(\d)$, a cusp $\vect x \d$ of $X_0(N)$ is called a {\sf cusp of level $\d$}. 
By Equation (\ref{eqn : 2.2  equivalence}), for $x, y \in \nabla(\d)$, we have 
\begin{equation}\label{eqn : 2.3 equivalence 2}
\vect x d = \vect y d \iff  x \equiv y \pmod {\gcd(\d, N/\d)}
\end{equation}
(cf. \cite[\textsection 1]{Og73}).
Therefore the number of all cusps of level $\d$ is $\p(\gcd(\d, N/\d))$. Since any cusp of $X_0(N)$ is equal to a cusp of level $\d$ for some divisor $\d$ of $N$, the number of all cusps is $\sum_{\d \mid N} \p(\gcd(\d, N/\d))$.
We define the divisor $(P_\d)$ as the sum of all cusps of level $\d$.
Note that the degree of $(P_\d)$ is $\varphi(\gcd(\d, N/\d))$ and $(P_\d)$ is invariant under $\GQ$ (cf. \cite[\textsection 2]{Lg97}).

\ms
By Equation (\ref{eqn : alpha beta on h}), we obtain the following (cf. \cite[p. 42]{Lg97}). 
Let $d$ be a divisor of $N/{p^r}$ and let $t = \gcd(d, N/d)=\gcd(d, N/{dp^r})$, which are both prime to $p$.
Then, for an integer $x \in \nabla(dp^i)$, we have
\begin{equation}\label{equation : alpha cusp}
\alpha_p(N)\vect {x}{dp^i} =\vect {x}{dp^i} \pmod {\G_0(N)} = \begin{cases}
\vect{x}{dp^i}  & \text{ if } i \leq r, \\
\vect{px+d}{dp^r} & \text{ if } i=r+1.
\end{cases}
\end{equation} 
Indeed, since $\gcd(N/{p^r}, p)=1$, there is an integer $k$ such that $\gcd(kN/{p^r}+p, N)=1$. By Equation (\ref{eqn : 2.2  equivalence}), we have
\begin{equation*}
\vect{x}{dp^{r+1}} = \vect{x}{dp^{r+1}+kdN} =\vect{x}{dp^r(p+kN/{p^r})} = \vect{(p+kN/{p^r})x}{dp^r} = \vect{px+d}{dp^r}.
 \end{equation*}
 The last equivalence follows from Equation (\ref{eqn : 2.3 equivalence 2}) since $ px+k(N/p^r)x \equiv px \equiv px+d \pmod t$ and $\gcd(px+d, dp^r)=1$ for all $r\geq 0$.
 
Similarly, we have 
\begin{equation}\label{equation : beta cusp}
\beta_p(N) \vect{x}{dp^i} =\vect{px}{dp^i} \pmod {\G_0(N)}=\begin{cases}
\vect{x}{dp^{i-1}}  & \text{ if } i \geq 1, \\
~~ \vect{px}{d} & \text{ if } i=0. \end{cases}
\end{equation}

\ms
By the description above, for $x, y \in \nabla(dp^i)$ we have
$$
\alpha_p(N) \vect{x}{dp^i} = \alpha_p(N) \vect{y}{dp^i} 
$$
if and only if $x \equiv y \pmod {p^b t}$, where $b=\min \{i, r-i\}$ for $0\leq i \leq r$ and $b=0$ for $i=r+1$. 
Moreover, 
$$
\alpha_p(N)\vect{x}{dp^{r+1}}  = \alpha_p(N) \vect{y}{dp^r} 
$$
if and only if $px \equiv y \pmod {t}$. 
Analogously, we obtain
$$
\b_p(N) \vect{x}{dp^i} = \b_p(N) \vect{y}{dp^i}
$$
if and only if $x \equiv y \pmod {p^c t}$, where $c=\min \{i-1, r+1-i \}$ for $1\leq i \leq r+1$ and $c=0$ for $i=0$. Moreover, we have
$$
\b_p(N) \vect{x}{d} = \b_p(N) \vect{y}{dp}
$$
if and only if $px \equiv y \pmod {t}$. In summary, we have the following, which is well-known to experts.
\begin{lem} \label{lem : 2.1 degeneracy maps on cusps}
Let $d$ be a divisor of $N/{p^r}$, where $r=\mathrm{val}_p(N) \geq 0$.
The map $\a_p(N)$ is ramified at a cusp $\vect{x}{dp^i}$ if and only if $0 \leq i \leq r/2$. Also, the map $\b_p(N)$ is ramified at a cusp $\vect{x}{dp^i}$ if and only if $r/2+1 \leq i \leq r+1$.
In particular, the ramification indices of the maps $\a_p(N)$ and $\b_p(N)$ at a cusp $\vect{x}{dp^i}$ depend only on $i$, neither on $x \in \nabla(dp^i)$ nor on $d$.
Furthermore, the ramification indices of $\a_p(N)$ and $\b_p(N)$ at a cusp are either $1$ or $p$. 
\end{lem}
\begin{proof}
Using the formula (4.5) on \cite[p. 538]{Ste85}, we can prove the statement for the map $\a_p(N)$ 
by comparing the ramification indices of $\vect {x}{dp^i}$ and $\a_p(N)\vect {x}{dp^i}$ over a cusp $\vect 1 1 \in X_0(1)$. For instance, if $r=0$ then the map $\a_p(N)$ is ramified at a cusp $\vect x d$ (of ramification index $p$) and unramified at a cusp $\vect x {dp}$ for any $x \in \nabla(dp)$. 

Now, consider the map $\b_p(N)$. If $r=0$, then we use the Atkin-Lehner theory \cite{AL70}. Let $w_p$ be the Atkin-Lehner involution on $X_0(Np)$ with respect to $p$. As a matrix form, we can take $W_p=\mat {ap} {-b} {N^2 p} p$, where $a$ and  $b$ are taken so that $ap+bN^2=1$. (This is possible because $\gcd(N^2, p)=1$.) Then, we have
\begin{equation*}
w_p  \vect{x}{d} =W_p \vect{x}{d} =\vect{apx-bd}{Np(Nx)+dp}  =\vect{y}{dp},
\end{equation*}
where $y=x$ if $\gcd(x, p)=1$ and $y=x+d$ otherwise. (Note that $\gcd (y, dp)=1$.) The last equivalence follows because $apx-bd \equiv x \equiv y \pmod {d}$ and of course $d$ is a multiple of $\gcd(d, Np/d)=\gcd(d, N/d)$.
Similarly, we have
\begin{equation*}
w_p \vect{x}{dp} =W_p \vect {x}{dp}=\vect{apx-bdp}{N^2px+dp^2} =\vect {ax-bd}{N^2 x+dp}=\vect{ax-bd}{d(N(N/d)x+p)}. 
\end{equation*}
Since $\gcd(N, p)=\gcd(x, dp)=1$, we have $\gcd(N(N/d)x+p, Np)=1$ and hence
\begin{equation*}
\vect {ax-bd}{d(N(N/d)x+p)}=\vect {(ax-bd)(N(N/d)x+p)}{d}=\vect {apx}{d} \equiv \vect {x}{d}.
\end{equation*}
(This can be also checked using $w_p^2=1$ on $X_0(Np)$.)
Since $\b_p(N)=\a_p(N) \circ w_p$,\footnote{On the cusps, we can also verify this equality by direct computation using the previous description.} the ramification index of $\b_p(N)$ at a cusp $\vect x d$ (resp. $\vect y {dp}$) is $1$ (resp. $p$). If $r\geq 1$, then we compute the ramification index of the map $\b_p(N)$ at a cusp $\vect {x}{dp^a}$ by searching non-equivalent cusps $\vect{y}{dp^i}$ on $X_0(Np)$ such that $\b_p(N)\vect{y}{dp^i} = \vect{x}{dp^a}$, where $i$ is either $a$ or $a+1$. This computation is done using the discussion above and the result follows for the case $r\geq 1$ because the degree of $\b_p(N)$ is $p$, which implies that the ramification index is either $1$ or $p$.
\end{proof} 

\ms
\subsection{The Hecke ring} \label{sec:hecke ring}
We review the results on the Hecke ring acting on Jacobian varieties and the space of modular forms. 
Let $\alpha_{p}(N)'$ and $\beta_{p}(N)'$ denote the transposes of $\alpha_{p}(N)$ and $\beta_{p}(N)$ viewed as correspondences, respectively. We define the Hecke correspondence $T_p$ on $X_0(N)$ by $\alpha_{p}(N)\circ \beta_{p}(N)'$:
$$
\xymatrix{
&X_0(Np) \ar[dl]_-{\beta_p(N)}\ar[dr]^-{\alpha_p(N)} & \\
X_0(N) \ar@{-->}[rr]^-{T_p}& & X_0(N).
}
$$

By direct computation (via the modular interpretation), we have
\begin{equation}
\begin{aligned}
\alpha_{p}(N) \circ \beta_{p}(N)'=\beta_{p}(N/p) ' \circ \alpha_{p}(N/p) \quad\text{ for } r \geq 2 \,;\\
\alpha_{p}(N)\circ \beta_{p}(N)'=\beta_{p}(N/p)' \circ \alpha_{p}(N/p)-w_p  \quad\text{ for } r=1,
\end{aligned}
\label{eqn : Hecke correspondence}
\end{equation}
where $w_p$ is the Atkin-Lehner involution on $X_0(Np)$ with respect to $p$ (cf. \cite[\textsection 13]{MR91}). 

\begin{defn} \label{defn : Hecke operator}
The \textsf{$p$-th Hecke operator $T_p \in \End(J_0(N))$} is the pullback of the correspondence $T_p$ to $J_0(N)$ (cf. \cite[\textsection 13]{MR91}). Namely, we define
$$
\xymatrix{
T_p := \beta_p(N)_* \circ \alpha_p(N)^* : J_0(N) \ar[r]^-{\alpha_p(N)^*} & J_0(Np) \ar[r]^-{\beta_p(N)_*} & J_0(N).
}
$$ 
The \textsf{$n$-th Hecke operator $T_n$} are inductively defined as follows.
\begin{itemize}
\item
$T_1 = 1$;

\item
$T_{ab}=T_aT_b$ if $(a, b)=1$;

\red{\item
$T_{p^k}=T_p T_{p^{k-1}}-pT_{p^{k-2}}$ for $k\geq 2$ if $p$ does not divide $N$;
\item
$T_{p^k}=T_p^k$ if $p$ divides $N$.}

\end{itemize}
The \textsf{Hecke ring $\T(N)$ of level $N$} is the subring of $\End(J_0(N))$ generated (over $\Z$) by $T_n$ for all integers $n\geq 1$. Sometimes (in Sections \ref{sec : Eisenstein series} and \ref{sec : the index}), $\T(N)$ is regarded as the subring of the endomorphism ring of the space of cusp forms of weight $2$ for $\Gamma_0(N)$ generated by the same named operators $T_n$ for all integers $n\geq 1$ (cf. \cite[\textsection 1]{MR91}).
\end{defn}

Suppose that $r\geq 1$, and let $T_p$ (resp. $\tau_p$) be $p$-th Hecke operator in $\T(N)$ (resp. $\T(N/p)$). Then by the formula (\ref{eqn : Hecke correspondence}), we have
\begin{equation}
\label{eqn : Hecke operator}
\begin{aligned}
T_p =  \alpha_p(N/p)^* \circ \beta_p(N/p)_* \quad\text{for } r \geq 2 \,;
\\
T_p +w_p = \alpha_p(N/p)^* \circ \beta_p(N/p)_* \quad\text{for } r = 1.
\end{aligned}
\end{equation}
(Here, $w_p$ is the Atkin-Lehner involution on $J_0(N)$ with respect to $p$.)
Also, we have
\begin{equation}
\label{eqn : Hecke, degeneracy}
\begin{aligned}
T_p \circ \alpha_{p}(N/p)^* = \alpha_{p}(N/p)^* \circ \tau_p,  \quad 
T_p \circ \beta_{p}(N/p)^* = p \cdot \alpha_{p}(N/p)^* \quad & \text{for } r \geq 2 \,;
\\
T_p \circ \alpha_{p}(N/p)^* = \alpha_{p}(N/p)^* \circ \tau_p - \beta_p(N/p)^*, \quad 
T_p \circ \beta_{p}(N/p)^* = p \cdot \alpha_{p}(N/p)^* \quad & \text{ for } r = 1
\end{aligned}
\end{equation}
(cf. \cite[\textsection 13]{MR91}).
\subsection{Old and new} \label{sec:old and new}
Throughout this subsection, we assume that $r\geq 1$. In other words, $p$ is a prime divisor of $N$.
We define the map
$$
\gamma_{p}(N)^* : J_0(N/p)\times J_0(N/p) \rightarrow J_0(N)
$$ 
by the formula $\gamma_{p}(N)^* (x, \,y) = \alpha_{p}(N/p)^*(x) + \beta_{p}(N/p)^*(y)$. 

Let $J:=J_0(N)$ and $\T:=\T(N)$. The image of $\gamma_{p}(N)^*$ is called the \textsf{$p$-old subvariety of $J$} and denoted by $J_{p\hyp\old}$. 
Similarly, the subvariety generated by the $p$-old subvarieties of $J$ for all prime divisors $p$ of $N$ is called the {\sf old subvariety of $J$} and denoted by $J_{\old}$. 
The quotient of $J$ by $J_{p\hyp\old}$ (resp. by $J_\old$) is called the \textsf{$p$-new quotient of $J$} (resp. {\sf new quotient of $J$}) and denoted by $J^{p\hyp\new}$ (resp. $J^\new$).

By the formula (\ref{eqn : Hecke, degeneracy}), we have the matrix relations viewed as endomorphisms of $J_0(N/p)\times J_0(N/p)$ (cf. \cite[p. 499]{Wi95})
\begin{equation} \label{eqn : matrix relation}
T_p = \bmat {\tau_p}{p}{0}{0}   \text{ for }~ r\geq 2 \qa T_p = \bmat {\tau_p}{p}{-1}{0}  \text{ for }~r=1,
\end{equation}
which make $\gamma_{p}(N)^*$ to be Hecke-equivariant, where $\tau_p$ is the $p$-th Hecke operator in $\T(N/p)$. 
(For a prime $q \neq p$, the $q$-th Hecke operator $T_q$ in $\T(N/p)$ acts diagonally on $J_0(N/p)\times J_0(N/p)$.)
The image of $\T$ in $\End(J_{p\hyp\old})$ (resp. $\End(J^{p\hyp\new})$) 
is called the \textsf{$p$-old} (resp. \textsf{$p$-new}) \textsf{quotient of $\T$} and denoted by $\T^{p\hyp\old}$ (resp. $\T^{p\hyp\new}$). Similarly, we define the {\sf new quotient $\T^\new$ of $\T$}. 
A maximal ideal of $\T$ is called \textsf{$p$-old} (resp. \textsf{$p$-new}) if its image in $\T^{p\hyp\old}$ (resp. $\T^{p\hyp\new}$) is still maximal. 
Similarly, a maximal ideal of $\T$ is called {\sf new} if its image in $\T^\new$ is still maximal.
Note that a maximal ideal of $\T$ is either $p$-old or $p$-new (or both); and a new maximal ideal is $p$-new for all prime divisors $p$ of $N$.

From the description above and the Cayley-Hamilton theorem, we have 
\begin{equation}
\label{eqn : T_p on old}
\begin{aligned}
T_p^2- \tau_p T_p = 0 \quad\text{for } r \geq 2 \, ;\\
T_p^2- \tau_p T_p + p = 0 \quad\text{for } r = 1
\end{aligned}
\end{equation}
as endomorphisms of $J_0(N/p) \times J_0(N/p)$. If we consider the Hecke ring $\T(N/p)$ and the $p$-old quotient $\T^{p\hyp\old}$ as subrings of $\End(J_0(N/p)^2)$, then they share a common subring $R$ of $\End(J_0(N/p)^2)$ generated by the $T_n$ with $n$ prime to $p$ and we have
\begin{equation*}
\T(N/p)=R[\tau_p] \qa \T^{p\hyp\old}=R[T_p].
\end{equation*}
Two Hecke operators $\tau_p$ and $T_p$ commute with each other and with the elements of $R$. Moreover, they are connected by the quadratic equation (\ref{eqn : T_p on old}). 
Let $\fa$ and $\fb$ be maximal ideals of $\T(N/p)$ and of $\T^{p\hyp\old}$, respectively. We say that they are {\sf compatible} if there is a maximal ideal of the ring $\fR=R[\tau_p, T_p]$ which contains them both (cf. \cite[\textsection 7]{R90}). 
We also say that a maximal ideal $\fm$ of $\T(N)$ is {\sf compatible} with a maximal ideal $\fn$ of $\T(N/p)$ if $\fm$ is $p$-old and the image of $\fm$ in $\T(N)^{p\hyp\old}$ is compatible with $\fn$. Furthermore, for any divisor $M$ of $N$, a maximal ideal $\fa$ of $\T(M)$ is {\sf compatible} with a maximal ideal $\fm$ of $\T(N)$ if both of the following hold.
\begin{itemize}
\item
There are divisors $N_i$ of $N$ such that 
$N_0=M$, $N_n=N$ and $N_{i+1}/{N_i}$ is prime for all $i$.
\item
There are maximal ideals $\fm_i$ of $\T(N_i)$ such that $\fm_i$ is compatible with $\fm_{i+1}$ for all $i$.
\end{itemize}

\vspace{2mm}
By the formula (\ref{eqn : Hecke operator}), the image of $T_p$ (resp. $T_p+w_p$) is contained in the $p$-old subvariety and hence $T_p$ (resp. $T_p+w_p$) maps to $0 \in \T^{p\hyp\new}$ if $r\geq 2$ (resp. $r=1$). Therefore we have the following.

\begin{lem}\label{lem:p^2 U_p eigenvalue}
Let $\fm$ be a maximal ideal of $\T(N)$. Then, the following holds.
\begin{enumerate}
\item
Assume that $\fm$ is $p$-new. Then, we have
\begin{equation*}
\begin{cases}
T_p -1 \in \fm \text{ or } ~T_p+1 \in \fm & \text{ if } r=1,\\
\quad T_p \in \fm & \text{ if } r \geq 2.
\end{cases}
\end{equation*}

\item
Assume that $r\geq 2$ and $T_p \not \in \fm$.
Then there is a maximal ideal $\fn$ of $\T(N/p^{r-1})$ compatible with $\fm$. Furthermore, we have
\[
T_p - \kappa \in \fm \Longleftrightarrow \tau_p - \kappa \in \fn,
\]
where $\tau_p$ is the $p$-th Hecke operator in $\T(N/{p^{r-1}})$.
\end{enumerate}
\end{lem}
\begin{proof}
The first claim is clear from the discussion above because $w_p$ is an involution. For the second one,  let $\e(n)$ be the image of $T_n$ in $\T(N)/\fm$. (Since $\fm$ is maximal, the quotient $\T(N)/\fm$ is a field.) 
Then, we have
\begin{equation*}
\fm=(\ell, T_q-\e(q) : \text{ for all primes } q ).
\end{equation*}
Assume that $T_p \not\in \fm$. Then, $\fm$ is not $p$-new, so it is $p$-old.
Moreover, the ideal 
\begin{equation*}
\fa = (\ell, T_p-\tau_p, T_p-\e(p), T_q-\e(q) : \text{ for all primes } q \neq p)
\end{equation*}
is a maximal ideal of $\fR$. (Note that $T_p$ and $\tau_p$ satisfy the quadratic equation $T_p^2-\tau_p T_p=0$ in $\fR$.) Thus, the ideal
\begin{equation*}
\fn = (\ell, \tau_p-\e(p), T_q-\e(q) : \text{ for all primes } q \neq p)
\end{equation*}
is also maximal because $\T(N/p)/\fn \simeq \fR/\fa \simeq \T(N)/\fm$ is a field. This proves the lemma for $r=2$. Applying this argument inductively (which is possible because $\tau_p \not \in \fn$), the result follows.
\end{proof}

\subsection{The maps $[N]^\bigstar_p$} \label{sec : themap [N]_p}
Let $\alpha_p(N)$ and $\beta_p(N)$ be two degeneracy maps from $X_0(Np)$ to $X_0(N)$ defined in Section \ref{sec : degeneracy maps}. 
\begin{defn}
For $x \in J_0(N)$, we define the maps $[N]^\bigstar_p : J_0(N) \rightarrow J_0(Np)$ by
\begin{equation}
\begin{cases}
[N]^+_p(x):=\alpha_p(N)^*(x)-\beta_p(N)^*(x) \\
[N]^-_p(x):=p \cdot \alpha_p(N)^*(x)-\beta_p(N)^*(x) \\
[N]_p(x):=\alpha_p(N)^*(x).
\end{cases}
\end{equation}
We also have the maps $[N]^\bigstar_p$ from the space of modular forms of weight $2$ for $\Gamma_0(N)$ to that for $\Gamma_0(Np)$ as follows. For a modular form $\cE$ of weight $2$ for $\Gamma_0(N)$, we define  the maps $[N]^\bigstar_p : M_2(\Gamma_0(N)) \to M_2(\Gamma_0(Np))$ by
\begin{equation}
\begin{cases}
[N]^+_p(\cE)(z):=\alpha_p(N)^*(\cE)(z)-\beta_p(N)^*(\cE)(z) = \cE(z)-p \cdot \cE(pz) \\
[N]^-_p(\cE)(z):=p\cdot \alpha_p(N)^*(\cE)(z)-\beta_p(N)^*(\cE)(z)=p\cdot (\cE(z)- \cE(pz)) \\
[N]_p(\cE)(z):=\alpha_p(N)^*(\cE)(z)=\cE(z).
\end{cases}
\end{equation}
\end{defn}
\noindent
The merit of these definitions is that they preserve ``Eisenstein modules''. In other words, we have the following.

\begin{prop} \label{prop : the maps [p]}
Let $T_p$ and $\tau_p$ denote the $p$-th Hecke operators in $\T(Np)$ and $\T(N)$, respectively.
Let $x \in J_0(N)$. Then, we have the following.
\begin{enumerate}
\item
If $r=0$ and $(\tau_p-p-1)x=0$, then $(T_p-1)[N]^+_p(x)=0$ and $(T_p-p)[N]^-_p(x)=0$.
\item
Suppose that $r \geq 1$. 
\begin{itemize}
\item
If $(\tau_p-1)x=0$, then $(T_p-0) [N]^-_p(x)=0$.
\item
If $(\tau_p-p)x=0$, then $(T_p-0) [N]^+_p(x)=0$.
\item
If $(\tau_p-\kappa)x=0$, then $(T_p-\kappa)[N]_p(x)=0$.
\end{itemize}
\end{enumerate}
If we denote by $T_p$ and $\tau_p$ the $p$-th Hecke operators acting on the spaces of modular forms of weight $2$ for $\Gamma_0(Np)$ and $\Gamma_0(N)$, respectively, then the same statement is true for a modular form $x=\cE$ of weight $2$ for $\Gamma_0(N)$.
\end{prop}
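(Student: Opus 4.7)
The plan is to reduce every assertion to the commutation relations (\ref{eqn : Hecke, degeneracy}) applied at level $Np$, where $\text{val}_p(Np) = r+1$. This means that the case $r=0$ of the proposition invokes the ``$r=1$'' instance of those relations (with $Np$ playing the role of $N$ there), while the case $r \geq 1$ invokes the ``$r \geq 2$'' instance. The only difference between the two regimes is that the identity
\begin{equation*}
T_p \circ \alpha_p(N)^* = \alpha_p(N)^* \circ \tau_p - \epsilon \cdot \beta_p(N)^*
\end{equation*}
carries $\epsilon = 1$ when $r=0$ and $\epsilon = 0$ when $r \geq 1$, while the second identity $T_p \circ \beta_p(N)^* = p \cdot \alpha_p(N)^*$ is unchanged between the two regimes.

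With these identities in hand, each of the six assertions reduces to a one-line substitution into the definition of $[N]^\bigstar_p$. I would dispatch the three $r \geq 1$ cases first, where the correction term vanishes and each expression $T_p \, [N]^\bigstar_p(x)$ collapses at once to a scalar multiple of $[N]^\bigstar_p(x)$ under the given eigenvalue hypothesis on $\tau_p x$. I would then handle the two identities for $r=0$, where the correction term is precisely what accounts for the asymmetry between the eigenvalues $1$ (for $[N]^+_p$) and $p$ (for $[N]^-_p$): the required cancellations follow from $\tau_p x = (p+1)x$ via $\tau_p x - p x = x$ and $p \tau_p x - p x = p^2 x$.

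For the modular form version, the identical template applies once one checks that the same commutation relations hold with $T_p$ acting as the $U_p$-operator on forms of level $Np$ (since $p \mid Np$) and with the pullbacks given by $\alpha_p(N)^* \cE(z) = \cE(z)$ and $\beta_p(N)^* \cE(z) = p \cdot \cE(pz)$. This is immediate from $q$-expansions: one uses $U_p V_p = \mathrm{id}$ together with $\tau_p \cE = U_p \cE + p V_p \cE$ at level $N$ when $p \nmid N$, which is exactly what produces the correction $-\beta_p(N)^*$ in the $r=0$ case.

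The only genuine obstacle is the bookkeeping of the correction term in the $r=0$ regime; otherwise the proposition is a routine consequence of (\ref{eqn : Hecke, degeneracy}).
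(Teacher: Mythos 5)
Your argument is correct and is essentially the paper's own proof: the paper simply cites the commutation relations (\ref{eqn : Hecke, degeneracy}) from \textsection \ref{sec:hecke ring}, applied at level $Np$ (so the ``$r=1$'' instance for $p\nmid N$ and the ``$r\geq 2$'' instance for $p\mid N$), exactly as you do, and your substitutions and the $q$-expansion check for the modular form case are the routine verifications the paper leaves implicit.
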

\begin{proof}
This is clear by the discussions in Section \ref{sec:hecke ring}, especially by the matrix relation (\ref{eqn : matrix relation}).
\end{proof}

\begin{rem} \label{rem : eigenvalue}
Since two degeneracy maps $\alpha_p(N)$ and $\beta_p(N)$ commute with the Hecke operators $T_n$ 
if $(n, p)=1$, so do $[N]^\bigstar_p$. Thus, $[N]^\bigstar_p$ preserve eigenspaces of all the Hecke operators $T_n$ if $(n, p)=1$.
\end{rem}

\begin{rem}\label{rem : injectivity [N]_p}
The map $[N]_p$ on $J_0(N)$ is always injective for any pair $(p, N)$. This follows from Ribet \cite[Th. 4.3]{R83} if $p$ does not divide $N$. Indeed, Ribet proved that the kernel of the map $\gamma : J_0(N) \times J_0(N) \to J_0(Np)$ is the antidiagonal embedding of the Shimura subgroup of $J_0(N)$, hence $[N]_p$ is injective. (The map $[N]_p$ is the composition of $\gamma$ and the embedding of $J_0(N)$ into the first component of $J_0(N)\times J_0(N)$.)
If $p$ divides $N$, then
$\alpha_p(N)$ is totally ramified at the cusp $0=\vect 1 1$, which implies the claim. (Note that by ``geometric class field theory'' for a given map $X \to Y$ between compact algebraic curves, the kernel of the induced map $J_Y \to J_X$ between their Jacobian varieties is isomorphic to the Cartier dual to the Galois group of the maximal unramified covering of $Y$ between $X \to Y$. Hence if $X \to Y$ is totally ramified at some point, then the induced map $J_Y \to J_X$ is always injective.)
\end{rem}

\ms
\section{Rational Eisenstein primes}\label{sec: rational Eisenstein primes}
In this section, we classify all rational Eisenstein primes. The main idea is analogous to that of \cite[\textsection 2]{Yoo3}.

\vspace{2mm}
Let $\fm$ be a rational Eisenstein prime containing a prime $\ell$, i.e., $\rho_{\fm} \simeq \mathbbm{1} \oplus \chi_{\ell}$. 
Then it contains 
$$
\cI_0(N):=(T_p-p-1:  \text{ for primes } p \nmid N)
$$ 
because 
$$
T_p \pmod {\fm} = \tr(\rho_{\fm}(\Frob_p))=1+p
$$
for a prime $p$ not dividing $\ell N$, and $T_{\ell}=1\equiv 1+\ell \pmod {\fm}$ if $\ell$ does not divide $N$ by Ribet \cite[\textsection 2]{Yoo3}.  (Although Ribet proved this fact only for $\ell \geq 3$, the same argument works for $\ell=2$.)

As before, let $\epsilon(p)$ denote the image of $T_p$ in $\T(N)/\fm$. Then, we have the following.

\begin{lem}\label{lem : e(p)=1, p, 0} 
We have $\e(p) \in \{1, p, 0\}$. More precisely, 
$$
\epsilon(p) \in 
\begin{cases}
\{1, p \}  & \text{ if } p \mid \mid N,\\
\{1, p, 0 \} & \text{ if } p^2 \mid N.
\end{cases}
$$
\end{lem}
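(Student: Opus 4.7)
The plan is a case analysis on $r := \text{val}_p(N)$ and on whether $\m$ is $p$-new, relying on Lemma \ref{lem:p^2 U_p eigenvalue} together with the Hecke relations \eqref{eqn : T_p on old}.

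First I would dispose of $r \geq 2$. If $\m$ is $p$-new, Lemma \ref{lem:p^2 U_p eigenvalue}(1) immediately gives $T_p \in \m$, so $\epsilon(p) = 0$. Otherwise $\m$ is only $p$-old, and either $T_p \in \m$ (again $\epsilon(p)=0$) or $T_p \notin \m$; in the latter case Lemma \ref{lem:p^2 U_p eigenvalue}(2) produces a maximal ideal $\fn \subset \T(N/p^{r-1})$ with the same image of $T_p$. The traces of $\rho_{\fn}$ and $\rho_{\m}$ agree at all $\Frob_q$ with $q \nmid \ell N$, so $\rho_{\fn} \simeq \mathbbm{1} \oplus \chi_{\ell}$ and $\fn$ is itself rational Eisenstein. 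Since $p$ exactly divides $N/p^{r-1}$, this reduces everything to the case $r = 1$.

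For $r = 1$ with $\m$ $p$-old, the corresponding $\fa \subset \T(N/p)$ is again rational Eisenstein, and since $p \nmid N/p$, we have $\tau_p \equiv p+1 \pmod{\fa}$. Pushing the quadratic relation $T_p^2 - \tau_p T_p + p = 0$ from \eqref{eqn : T_p on old} through the residue-field isomorphism $\T(N)/\m \simeq \T(N/p)/\fa$ yields $(T_p - 1)(T_p - p) \equiv 0 \pmod{\m}$, so $\epsilon(p) \in \{1, p\}$.

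The subtle remaining sub-case is $r = 1$ with $\m$ $p$-new: Lemma \ref{lem:p^2 U_p eigenvalue}(1) gives $\epsilon(p) = \pm 1$, and when $\epsilon(p) = -1$ I still need to show $-1 \in \{1, p\}$ inside the field $\T(N)/\m$. My preferred approach is a local Galois argument at $p \neq \ell$: $\rho_{\m}^{\mathrm{ss}} \simeq \mathbbm{1} \oplus \chi_{\ell}$ is unramified at $p$ with Frobenius trace $1+p$, whereas a $p$-new Steinberg eigensystem with $a_p = -1$ is the unramified quadratic twist of the standard Steinberg representation and has semisimple Frobenius trace $-(1+p)$; matching forces $\ell \mid 2(1+p)$, so $\ell = 2$ or $p \equiv -1 \modl$, and either way $-1 \in \{1, p\}$ in the residue field. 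This last sub-case is the main obstacle; a cleaner, more paper-idiomatic alternative is to invoke the explicit Eisenstein series at level $N$ from \textsection \ref{sec : Eisenstein series}, whose $T_p$-eigenvalues for $p \mid\mid N$ are only $1$ or $p$, so any cuspidal eigenform congruent mod $\m$ to one of them must satisfy $a_p \equiv 1$ or $p \modl$, avoiding local Galois theory entirely. A secondary technicality is $p = \ell$ (permitted since only $\ell^2 \nmid N$ is assumed), where $\rho_{\m}^{\mathrm{ss}}$ is ramified at $p$ and the trace comparison must be replaced by an argument exploiting the ordinarity of Eisenstein-congruent forms at $\ell$.
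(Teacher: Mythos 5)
Your proposal is correct and follows essentially the same route as the paper: for $r\geq 2$ the paper reduces to the $p\mid\mid N$ case via Lemma \ref{lem:p^2 U_p eigenvalue} exactly as you do, and for $p\mid\mid N$ it simply invokes ``the same argument as in \cite[Lemma 2.1]{Yoo3}'', which is precisely the old/new dichotomy you spell out (Cayley--Hamilton with $\tau_p\equiv p+1$ in the $p$-old case, and the local structure of $\rho_{\m}$ at $p$ forcing $\ell=2$ or $p\equiv -1\modl$ when $\m$ is $p$-new with $T_p\equiv -1$). One caution: your ``cleaner alternative'' via matching $U_p$-eigenvalues with an Eisenstein series is circular as stated, since $\rho_{\m}\simeq \mathbbm{1}\oplus\chi_{\ell}$ a priori only controls $T_q$ for $q\nmid \ell N$, so the Galois-theoretic argument (together with its $p=\ell$ variant, which you flag but do not carry out) should remain the actual proof.
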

\begin{proof}
If $p \mid \mid N$, the result follows from the same argument as in \cite[Lem. 2.1]{Yoo3}.

Let $r:=\mathrm{val}_p(N)$, and assume that $r\geq 2$. If $\fm$ is $p$-new, then $T_p \in \fm$ by Lemma \ref{lem:p^2 U_p eigenvalue} and hence $\e(p)=0$.
Suppose that $T_p \not\in \fm$. Then, again by Lemma \ref{lem:p^2 U_p eigenvalue}, there is a maximal ideal $\fn$ of $\T(N/{p^{r-1}})$ compatible with $\fm$. Since compatible maximal ideals share the common eigenvalues for the Hecke operators $T_q$ for primes $q\neq p$, $\fn$ also contains $\cI_0(N/{p^{r-1}})$. Thus, by the case above either $\tau_p -1 \in \fn$ or $\tau_p-p \in \fn$, where $\tau_p$ is the $p$-th Hecke operator in $\T(N/{p^{r-1}})$.
Therefore either $T_p-1 \in \fm$ or $T_p-p \in \fm$ by Lemma \ref{lem:p^2 U_p eigenvalue}.
\end{proof}

\begin{defn}\label{defn : cI_M, N}
Let $(M, D) \in \fS(N)$. Namely, $M$ is a divisor of $N^{\rrad}$ and 
$D$ is a divisor of $N^{\sqq}$ prime to $M$. Then, we define an ideal
\begin{equation*}
\begin{split}
\cI^D_{M, N} &:= (T_p-a_p(M, N, D) : \text{ for all primes } p)\\
& =\left (T_p-1, ~T_q-q,~T_\fp, ~\cI_0(N) : \text{ for primes } p \mid M, ~q \mid  \frac{N^{\rrad}}{MD}, ~\fp \mid D \right ).
\end{split}
\end{equation*}
If $D=N^{\sqq}$, then it is simply denoted by $\cI_{M, N}$.
Note that if $q \equiv 1 \modl$ for a prime divisor $q$ of $\frac{N^{\rrad}}{MD}$, then $(\ell, ~\cI^D_{M, N})=(\ell,~\cI^D_{Mq, N})$. Thus, when we say that \textsf{$\fm$ is of the form $(\ell, ~\cI^D_{M, N})$} or $\fm :=(\ell,~\cI^D_{M, N})$, then we \textit{always} assume that $q \not\equiv 1 \modl$ for all prime divisors $q$ of $\frac{N^{\rrad}}{MD}$. 
\end{defn}

In conclusion, we have the following theorem.
\begin{thm}\label{thm : classification of Eisenstein ideals}
Let $\fm$ be a rational Eisenstein prime of $\T(N)$ containing a prime $\ell$. 
Then, $\fm$ is of the form $(\ell, ~\cI^D_{M, N})$ for some $(M, D) \in \fS_0(N)$.
Furthermore if $\fm$ is new, then $D=N^{\sqq}$ and $q \equiv -1 \modl$ for all prime divisors $q$ of $N^\sqf$ not dividing $M$.
\end{thm}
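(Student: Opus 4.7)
For each prime $p \mid N$, write $\epsilon(p) := T_p \bmod \m$; Lemma \ref{lem : e(p)=1, p, 0} gives $\epsilon(p) \in \{1, p\}$ when $p \| N$ and $\epsilon(p) \in \{1, p, 0\}$ when $p^2 \mid N$. I would then set
\[
M \,:=\, \prod_{\substack{p \mid N \\ T_p - 1 \in \m}} p \qquad\text{and}\qquad D \,:=\, \prod_{\substack{p^2 \mid N \\ T_p \notin \m}} p,
\]
so that $D \mid N^\square$, and $M \mid N^\sqf D$ because any $p \mid M$ with $p^2 \mid N$ has $\epsilon(p) = 1 \neq 0$ and hence $p \mid D$. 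Every generator of $\cI^D_{M,N}$ now lies in $\m$ by construction (using also $T_\ell - \ell - 1 \in \m$ when $\ell \nmid N$, as recalled at the start of this section), giving $\cI^D_{M,N} \subseteq \m$. This choice of $M$ already absorbs every prime $q \equiv 1 \pmod \ell$ with $T_q - q \in \m$, as required by Definition \ref{defn : cI_M, N}.

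To promote $(\ell, \cI^D_{M,N}) \subseteq \m$ to an equality, I would observe that in $A := \T(N)/(\ell, \cI^D_{M,N})$ every generator $T_p$ of the Hecke algebra becomes an explicit scalar of $\F_\ell$ (namely $p+1$ for $p \nmid N$ from $\cI_0(N)$, and $0$, $1$ or $p$ for $p \mid N$). Since $\T(N)$ is generated over $\Z$ by the $T_p$, the ring $A$ is a quotient of $\F_\ell$; and $A \neq 0$ because $(\ell, \cI^D_{M,N})$ sits inside the proper ideal $\m$. Hence $A = \F_\ell$, the ideal is maximal, and $(\ell, \cI^D_{M,N}) = \m$.

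The heart of the argument is the non-triviality $M(N^\square/D) \neq 1$, which I plan to prove by induction on $N^\square$. The base case $N^\square = 1$ reduces the claim to $M \neq 1$ in the square-free setting, which is already established in \cite[\textsection 2]{Yoo3}. For the inductive step, suppose $N^\square > 1$. If some prime $p$ with $p^2 \mid N$ satisfies $T_p \in \m$, then $p \mid N^\square/D$ and we are done; otherwise every such $p$ has $T_p \notin \m$, so I would pick one, set $r := \text{val}_p(N)$, and invoke Lemma \ref{lem:p^2 U_p eigenvalue}(2) to produce a rational Eisenstein maximal ideal $\fn \subseteq \T(N/p^{r-1})$ with $\T(N)/\m \simeq \T(N/p^{r-1})/\fn$ preserving the image of every $T_q$ (rational Eisensteinness of $\fn$ follows from the matching of residue fields and Frobenius traces). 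The square part of $N/p^{r-1}$ is strictly smaller than $N^\square$, so by induction the data $(M', D')$ attached to $\fn$ satisfies $M'((N/p^{r-1})^\square/D') \neq 1$. Any prime $q$ witnessing this transfers back: if $q \mid M'$ then $T_q - 1 \in \m$, so $q \mid M$; if $q^2 \mid N/p^{r-1}$ and $T_q \in \fn$, then necessarily $q \neq p$ (since $\text{val}_p(N/p^{r-1}) = 1$), hence $q^2 \mid N$ and $T_q \in \m$, so $q \mid N^\square/D$. Either way, $M(N^\square/D) \neq 1$. I expect this induction --- specifically, the bookkeeping needed to transport the witness prime cleanly back to level $N$ and the reliance on the square-free base case from \cite{Yoo3} --- to be the main obstacle.

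Finally, suppose $\m$ is new, hence $p$-new for every $p \mid N$. For each $p$ with $p^2 \mid N$, Lemma \ref{lem:p^2 U_p eigenvalue}(1) gives $T_p \in \m$, which rules out $p \mid D$ and forces $D = 1$. For each $p \mid N^\sqf$ with $p \nmid M$, the same lemma gives $T_p \equiv \pm 1 \pmod \m$; the value $+1$ is excluded by $p \nmid M$, so $T_p \equiv -1 \pmod \m$, and combining with $\epsilon(p) \in \{1, p\}$ forces $p \equiv -1 \pmod \ell$, as required.
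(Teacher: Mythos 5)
Your proposal is correct and follows essentially the paper's own route: both compute $\epsilon(p)$ via Lemma \ref{lem : e(p)=1, p, 0}, obtain $\m=(\ell,\cI^D_{M,N})$ by the same scalar-quotient argument, rule out $M(N^\square/D)=1$ by using Lemma \ref{lem:p^2 U_p eigenvalue}(2) to lower the level at primes $p\mid N^\square$ with $T_p\notin\m$ until the square-free classification from \cite{Yoo3} applies, and handle the new case with Lemma \ref{lem:p^2 U_p eigenvalue}(1); your prime-by-prime induction with witness transfer is just a repackaging of the paper's direct reduction of the single ideal $(\ell,\cI^{N^\square}_{1,N})$. The only visible difference is that the paper treats $\ell=2$ separately (using the convention of Definition \ref{defn : cI_M, N} to force $N=2$, then citing Mazur), whereas you absorb it into the base-case citation --- harmless, since with your maximal choice of $M$ the case $\ell=2$, $M=1$ at square-free level forces $N\mid 2$, where $\T(N)$ has no maximal ideals.
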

\begin{proof}
Let $M$ be the product of the prime divisors $p$ of $N$ such that $\e(p)=1$, and let $D$ be the product of the prime divisors $q$ of $N^{\sqq}$ such that $\e(q)=0$. Then by Lemma \ref{lem : e(p)=1, p, 0}, $\fm:=(\ell, \cI_{M, N}^D)$ with $(M, D) \in \fS(N)$. 
Thus, to prove the first claim it suffices to show that $MD \neq 1$. Suppose that $M=D=1$, i.e., we assume that for any prime divisor $p$ of $N$ we have $T_p-p \in \fm$ and $p\not\equiv 1 \pmod \ell$. 

Let $p$ be a prime divisor of $N^{\sqq}$ and let $r=\mathrm{val}_p(N)\geq 2$.
Since $T_p \not\in \fm$ but $T_p- p \in \fm$, we have $p \not \in \fm$ and hence $p \neq \ell$. Furthermore since $T_p \not \in \fm$, by Lemma \ref{lem:p^2 U_p eigenvalue}, there is a maximal ideal $\fn$ of level $N/{p^{r-1}}$ compatible with $\fm$, and so $\fn$ is of the form $(\ell, \cI_{1, N/{p^{r-1}}}^1)$. Applying the same method for all the prime divisors of $N^{\sqq}$, we can find a maximal ideal $\fa:=(\ell,~\cI_{1, N^{\rrad}})$ of $\T(N^{\rrad})$ compatible with $\fm$. If $\ell$ is odd, this contradicts \cite[Prop. 5.5]{Yoo3}. 
If $\ell=2$, then $T_p-p \equiv T_p-1 \pmod \ell$ if $p$ is an odd prime. 
Thus, by our convention $N^{\sqq}=1$ and $N^{\rrad}$ must divides $2$.
However this is a contradiction because $X_0(1)$ and $X_0(2)$ are of genus 0.

For the last claim, suppose that $\fm$ is new (and hence it is $p$-new for any prime divisor $p$ of $N$). By Lemma \ref{lem:p^2 U_p eigenvalue}, for any prime divisor $p$ of $N^{\sqq}$ we have $T_p \in \fm$. Thus, we obtain $D=N^{\sqq}$.
 Now assume that $q$ divides $N^\sqf$ but it does not divide $M$. Then, we have $T_q-q \in \fm$.
Since $\fm$ is $q$-new and $q$ exactly divides $N$,  we have either $T_q-1 \in \fm$ or $T_q+1 \in \fm$ by Lemma \ref{lem:p^2 U_p eigenvalue}. Since $q \not \equiv 1 \modl$, we have $q \equiv -1 \modl$, as desired.
\end{proof}

\ms
\section{The cuspidal divisor $\cC^D_{M, N}$ on $J_0(N)$}\label{sec: the cuspidal divisor}
In this section, for a pair $(M, D) \in \fS(N)$ we define a cuspidal divisor $C_{M, N}^D$ on $X_0(N)$. For $(M, D) \in \fS_0(N)$, we compute the order of the equivalence class $\cC_{M, N}^D$ of $C_{M, N}^D$ in $J_0(N)$. 
The result of this section generalizes \cite{Lg97} and \cite{Yoo3} to the case of non-squarefree level. We also show that the cyclic group $\langle \cC^D_{M, N} \rangle$ generated by $\cC_{M, N}^D$ is annihilated by $\cI^D_{M, N}$.

Throughout this section, we frequently use the notation of Sections \ref{sec : notation} and \ref{sec : the cusp} without referring to it.

\ms
\subsection{Definition of $\cC^D_{M, N}$}
Let $(M, D) \in \fS(N)$. If $M$ is a divisor of $N^\sqf$, then we define a divisor
$$
C^D_{M, N}:=\sum\limits_{\d \mid MD}  (-1)^{\omega(\d)} \cdot \varphi\left ( \frac{D}{\gcd(\d, D)} \right )\cdot  (P_\d) \in \Div(X_0(N)).
$$
The equivalence class of $C_{M, N}^D$ (modulo principal divisors) is denoted by $\cC_{M, N}^D$, which is indeed a point on $J_0(N)$ if the degree of $C_{M, N}^D$ is $0$.

\begin{rem}
If $(M, D) \in \fS(N) \setminus \fS_0(N)$, then $M=D=1$ and hence $C^{1}_{1, N}=(P_1)=\vect 1 1$. 
Therefore it cannot define a point on $J_0(N)$, so we often exclude this case. 
If $MD\neq 1$ and $M$ is a divisor of $N^\sqf$, then the degree of $C^D_{M, N}$ is $0$ because
the degree of $(P_\d)$ is $\varphi(\gcd(\d, D))$ for a divisor $\d$ of $MD$. This observation gives a hint on the fact that $\fm := (\ell, ~\cI^{D}_{M, N})$ cannot be maximal if $MD=1$ (Theorem \ref{thm : classification of Eisenstein ideals}).
\end{rem}

If $M$ is not a divisor of $N^\sqf$, i.e., $M$ and $N^{\sqq}$ are not relatively prime, then we define a cuspidal divisor $C_{M, N}^D$ using the degeneracy maps as follows. Let $\omega(M, N^{\sqq})$ be the number of prime divisors of $\gcd(M, N^{\sqq})$. We will use induction on $\omega(M, N^{\sqq})$.
\begin{itemize}
\item
If $\omega(M, N^{\sqq})=0$ then $C_{M, N}^D$ is already defined.
\item
Suppose that we have defined $C_{M, N}^D$ if $\omega(M, N^{\sqq})\leq n$.
Let $\omega(M, N^{\sqq})=n+1$, and let $p$ be a prime divisor of $\gcd(M, N^{\sqq})$. Since $p$ is a divisor of $N^{\sqq}$, we have $r=\mathrm{val}_p(N)\geq 2$. Also, since $N^{\sqq}$ is squarefree we have $\gcd(M, N^{\sqq}/p)=\gcd(M, N^{\sqq})/p$ and hence $\omega(M, N^{\sqq}/p)=n$.  
Since $(N/{p^{r-1}})^{\sqq} = N^{\sqq}/p$, the divisor $C_{M, N/{p^{r-1}}}^{D}$ is defined by assumption and hence we can define as
\begin{equation*}
C_{M, N}^{D}:=\alpha_p(N/p)^* \circ \cdots \circ \alpha_p(N/{p^{r-1}})^*(C_{M, N/{p^{r-1}}}^{D})=[N/p]_p \circ \cdots \circ [N/{p^{r-1}}]_p(C_{M, N/{p^{r-1}}}^{D}).
\end{equation*}
\end{itemize}
Therefore the definition of $C_{M, N}^D$ is completed for any $(M, D) \in \fS(N)$ by induction. The equivalence class of the divisor $C_{M, N}^D$ is denoted by $\cC_{M, N}^D$, and if $D=N^{\sqq}$, it is simply denoted by $\cC_{M, N}$.
\ms

For convenience of the reader, we give a more explicit formula for $C_{M, N}^D$. Note that if $\gcd(M, N^{\sqq})\neq 1$, then  $(M, D)$ automatically belongs to $\fS_0(N)$ and hence\footnote{By direct computation, we can prove the following. For $(M, D) \in \fS(N)$ we have $MD\neq 1$ if and only if the degree of $C_{M, N}^D$ is $0$ (without the assumption that $M$ is a divisor of $N^\sqf$).} the degree of $C_{M, N}^D$ is $0$. 
\begin{lem}\label{lem : 4.2}
Let $p$ be a prime divisor of $\gcd (M, N^{\sqq})$, $r=\mathrm{val}_p(N) \geq 2$ and $N'=N/{p^r}$.
If 
\begin{equation*}
C_{M,  N'p}^{D} = \sum_{d \mid N'} \{a(d) \cdot (P_d)+b(d) \cdot (P_{dp})\},
\end{equation*}
then we have
\begin{equation*}
C_{M, N}^D=\sum_{d \mid N'} \{ a(d)\cdot p^{r-1}\cdot (P_d)+\sum_{k=1}^r b(d)\cdot p^{\fr(k)} \cdot (P_{dp^k}) \},
\end{equation*}
where $\fr(k)=\max \{0, r-2k\}$. In particular, for $r\geq 2$ we have
\begin{equation*}
C_{1, p^r}^p = p^{r-1}\cdot (P_1)-\sum_{k=1}^r p^{\fr(k)}\cdot (P_{p^k}).
\end{equation*}
\end{lem}
\begin{proof}
Let $d$ be a divisor of $N'$. By Lemma \ref{lem : 2.1 degeneracy maps on cusps} the map $\alpha_p(N'p^t):X_0(N'p^{t+1}) \to X_0(N'p^t)$ is ramified (of ramification index $p$) at $\vect x {dp^i}$ if and only if $0 \leq i \leq t/2$. Thus, we have the formula
\begin{equation*}
 \a_p(N'p^t)^* (P_{dp^i})=\begin{cases}
 p\cdot (P_{dp^i}) & \text{ if }  i \leq  t/2, \\
(P_{dp^i}) & \text{ if }    t/2<  i < t, \\
 (P_{dp^{t+1}})+(P_{dp^t}) & \text{ if } i=t\geq 1, \\
  (P_{dp})+p\cdot (P_{d}) & \text{ if } t=i=0.
 \end{cases}
\end{equation*}
Since $C_{M, N}^D=\a_p(N'p^{r-1})^*\circ \cdots \circ \a_p(N'p)^*(C_{M, N'p}^{D})$, 
applying the maps $\a_p(N'p^t)$ inductively, we obtain the result.
\end{proof}
In all cases under consideration, we have $a(d)=-b(d)$ for any divisor $d$ of $N'$.

\ms
\subsection{The order of $\cC^D_{M, N}$}
We first compute the order of $\cC_{M, N}^D$ when $M$ is a divisor of $N^\sqf$. The remaining cases follow from this because $[N/{p^a}]_p=\alpha_p(N/{p^a})^*$ is injective for any $1\leq a \leq \mathrm{val}_p(N)$. 
\begin{thm}\label{thm : order of C_M, N}
Let $(M, D) \in \fS_0(N)$. If $M$ is a divisor of $N^\sqf$, then the order of $\cC_{M, N}^D$ is the numerator of $\frac{1}{24}\times h(M, N, D)\times \cN(M, N, D)$.
\end{thm}
If $N$ is squarefree, then the order of $\cC_{M, N}$ is already computed by the author \cite[\textsection 3]{Yoo3}. If $N$ is a prime power, it is done by Ling (cf. \cite[p. 41]{Lg97}).

\ms
Before proving this theorem, we discuss its consequence.

\begin{cor} \label{cor : the order of C_M, N}
For $(M, D) \in \fS_0(N)$, let $A=\gcd (M, N^{\sqq})$ and $B=\prod_{p \mid A} p^{\mathrm{val}_p(N)-1}$. If $A\neq 1$, then the order of $\cC_{M, N}^D$ is equal to the numerator of $\frac{1}{24}\times h(M, N/B, D)\times \cN(M, N/B, D)$.
\end{cor}
\begin{proof}
Since $A\neq 1$, we get $M>1$, so we have $(M, D) \in \fS_0(N/B)$.
Thus, by Theorem \ref{thm : order of C_M, N} the order of $\cC_{M, N/B}^{D}$ is the numerator of $\frac{h(M, N/B, D)\times \cN(M, N/B, D)}{24}$ because $\gcd(M, (N/B)^\sqq)=1$.
Since $[N/{p^a}]_p=\alpha_p(N/{p^a})^*$ is injective for any $1\leq a \leq \mathrm{val}_p(N)$ and for any prime divisor $p$ of $A$, the order of $\cC_{M, N}^D$ is equal to that of $\cC_{M, N/B}^{D}$ by definition, and hence the claim follows.  
\end{proof}

\begin{proof}[Proof of Theorem \ref{thm : order of C_M, N}]
To compute the order of a cuspidal divisor $C$ we use the same method as in \cite[\textsection 2]{Lg97}. For convenience of the reader, we review some results in \textit{loc. cit.} 

\vspace{2mm}
Let $N$ be a positive integer, and let $\cS(N)$ be the set of the divisors of $N$.
Let $$\vp=\#\cS(N)=\prod_{p \mid N} (1+\mathrm{val}_p(N))$$ 
be the number of divisors of $N$.
Let $\bd{r}=(r_\d) \in M_{\vp \times 1}(\Q)$ be a vector of size $\vp$ indexed by all the divisors of $N$. (Here, by a {\sf vector of size} $n$ we mean an $(n\times 1)$ column matrix.)
Let
\begin{equation*}
g_{\bd{r}} = \prod_{\d \in \cS(N)} \eta_{\d}^{r_\d}
\end{equation*}
be a (formal) function on the complex upper half plane, where $\eta_\d(z):=\eta(\d z)$ and $\eta$ is the Dedekind $\eta$-function. Then, $g_{\bd{r}}$ is a genuine modular function on $X_0(N)$ if and only if the following conditions are satisfied:
\begin{enumerate}
    \setcounter{enumi}{-1} 
\item
all the $r_\d$ are rational integers;
\item
$\sum_{\d \in \cS(N)} r_\d \cdot \d \equiv 0 \pmod {24}$;
\item
$\sum_{\d \in \cS(N)} r_\d  \cdot (N/\d) \equiv 0 \pmod {24}$;
\item
$\sum_{\d \in \cS(N)} r_\d  =0$;
\item
$\prod_{\d \in \cS(N)} \d^{r_\d}$ is the square of a rational number.
\end{enumerate}
(Proposition 1 of \textit{loc. cit.})

\vspace{2mm}
For divisors $d$ and $\d$ of $N$, let
\begin{equation*}
a_N(d, \d) := \frac{N}{\gcd(d, N/{d})} \cdot \frac{\gcd(d, \d)^2}{d \cdot \d} \qa b_{\bd{r}} (d):=\frac{1}{24}\sum_{\d \in \cS(N)} a_N(d, \d) \cdot r_\d.
\end{equation*}
If $\bd{r}=(r_\d) \in M_{\vp\times 1}(\Q)$ satisfies all the conditions above, then the divisor of $g_{\bd{r}}$ is supported only on the cusps, and 
\begin{equation*}
\text{div} (g_{\bd{r}})=\sum_{d \in \cS(N)} b_{\bd{r}}(d) \cdot (P_d).
\end{equation*}
Moreover, we have $\text{deg} (\text{div} (g_{\bd{r}}))=\sum_{d \in \cS(N)} b_{\bd{r}}(d) \cdot \p (\gcd(d, N/d))=0$.

Conversely, for a vector $\bd{m}=(m_d) \in M_{\vp \times 1} (\Z)$ with integral entries, if $C=\sum_{d \in \cS(N)} m_d \cdot(P_d)$ is a cuspidal divisor of degree $0$, then there is a modular function $g_{\bd{r}}$ such that $C$ is an integral multiple of the divisor of $g_{\bd{r}}$. 

\ms
Let 
\begin{equation*}
S_1(N):=\left\{ \bd{r}=(r_\d) \in M_{\vp \times 1} (\Q) : \sum_{\d \in \cS(N)} r_\d = 0 \right\}
\end{equation*}
and
\begin{equation*}
S_2(N):=\left\{ \bd{m}=(m_d) \in M_{\vp \times 1} (\Q) : \sum_{d \in \cS(N)} m_d \cdot \p(\gcd(d, N/{d}))=0 \right\}.
\end{equation*}
Let $V$ be an obvious bijection from the set of rational cuspidal divisors on $X_0(N)$
\begin{equation*}
S_c(N):=\left\{ \sum_{d \in \cS(N)} (m_d) \cdot (P_d) : (m_d) \in M_{\vp \times 1} (\Q) \right\}
\end{equation*}
to $M_{\vp \times 1}(\Q)$ defined by $V(\sum_{d \in \cS(N)} (m_d) \cdot (P_d))=(m_d)$. 
By the discussion above, we have a matrix $\Lambda(N) \in \GL_{\vp} (\Q)$, defined by  
\begin{equation*}
V^{-1}\Lambda(N)(\bd{r}) :=\sum_{d \in \cS(N)} b_{\bd{r}}(d)\cdot (P_d) \text{ for any } \bd{r}=(r_\d) \in S_1(N).
\end{equation*}
Therefore its entries are given by $\Lambda(N)_{d\d}=\frac{1}{24} a_N (d, \d)$ and 
it gives rise to a bijection from $S_1(N)$ to $S_2(N)$, and therefore its determinant is not 0 (Proposition 2 \textit{loc. cit.}). 

\vspace{2mm}
Let $C_{M, N}^D=\sum_{d \in \cS(N)} m_d \cdot (P_d) \in S_c(N)$ be a rational cuspidal divisor with $m_d \in \Z$. By the result of Ligozat \cite[\textsection 3]{Li75}, the order of $\cC_{M, N}^D$ in $J_0(N)$ is the smallest positive integer $\k$ such that the entries of $\k\Lambda(N)^{-1}V(C_{M, N}^D)$ satisfy all the conditions above\footnote{If the degree of $C_{M, N}^D$ is not $0$ then its equivalence class $\cC_{M, N}^D$ is not a point on $J_0(N)$. 
Nevertheless, we consider this case as well and compute the entries of $\Lambda(N)^{-1} V(C_{M, N}^D)$. Later, we will see that the condition (3) cannot be fulfilled for any $\k\geq 1$ in this case.}.
Below, we will explicit compute the entries of $\Lambda(N)^{-1} V(C_{M, N}^D)$ (Lemma \ref{lem : S1 entries}) and the quantities in the conditions above for $\Lambda(N)^{-1} V(C_{M, N}^D)$ (Lemma \ref{lem : 4.8}) using an inductive method. 

\vspace{2mm}
During the proof, let 
\begin{equation*}
R(M, N, D):=\Lambda(N)^{-1} V(C_{M, N}^D) \in M_{\vp \times 1}(\Q).
\end{equation*}
Note that even though the entries of $R(M, N, D)$ (in a vector form) depend on an order on the set $\cS(N)$, all the quantities in the conditions do not. So, we will often choose a special order on the set $\cS(N)$ and describe its entries (with respect to this special order) as a vector form. 

\ms
To introduce an inductive method, we fix some notation.
Let $(M, D) \in \fS_0(N)$ with $\gcd(M, N^{\sqq})=1$. Let $q$ be a prime divisor of $N$ and let $r=\mathrm{val}_q(N)\geq 1$. Let
\begin{equation*}
M':=\frac{M}{\gcd(M, q)}, D':=\frac{D}{\gcd(D, q)} \qa N':=\frac{N}{q^r}.
\end{equation*}
In other words, we have
\begin{equation*}
(M, N, D)=\begin{cases}
(M'q, N'q, D') & \text{ if } a_q(M, N, D)=1, \\
(M', N'q^r, D') & \text{ if } a_q(M, N, D)=q, \\
(M', N'q^r, D'q) & \text{ if } a_q(M, N, D)=0. \\
\end{cases}
\end{equation*}
Let $\u=\#\cS(N')$ be the number of divisors of $N'$, which is equal to $\vp/{(1+r)}$. Let $\cS(N')$, $C_{M', N'}^{D'}$, $\Lambda(N')$ and $R(M', N', D')$ be defined by replacing $M, N, D$ by $M', N', D'$. (Here, we do not require that the degree of $\cC_{M', N'}^{D'}$ is $0$. Indeed, since the determinant of $\Lambda(N')$ is not $0$ (when $N'=1$, we set $\Lambda(1)=\frac{1}{24}$), $R(M', N', D')$ is well-defined.)
Below, we will describe the entries of $\Lambda(N)^{-1}$ and
$R(M, N, D)$ using those of $\Lambda(N')^{-1}$ and $R(M', N', D')$, respectively, after choosing a suitable indexing of the set $\cS(N)$ (via the one on $\cS(N')$).

\ms
Let 
$$
b_{ij}= \frac{1}{q^r (q^2-1)} (q^{j-1}, ~q^{r+1-j}) \cdot \kappa_{ij},
$$
where
$$
\addtolength{\arraycolsep}{-2.5pt}  
\renewcommand{\arraystretch}{1.1}%
\kappa_{ij}= 
\left\{
 \begin{array}{cl}  
  q^2 & ~\text{if }  ~i=j=1~\text{ or } r+1 ; \\
  q^2+1   & ~\text{if } ~2 \leq i=j \leq r ;\\
  -q & ~\text{if } ~ | i - j | =1 ;\\
  0 & ~\text{if } ~ | i - j | \geq 2.
\end{array}
\right.
$$
Then, we have the following.
\begin{lem}\label{lem:inverseofLambda, induction}
There is a suitable indexing of the set $\cS(N)$ so that the matrix form of $\Lambda(N)^{-1}$ is
$$
\Lambda(N)^{-1} = 
\left( \begin{array}{cccc}
M_{11} & M_{12} & \cdots & M_{1(r+1)}\\
M_{21} & M_{22} & \cdots & M_{2(r+1)}\\
\vdots & \vdots & \ddots & \vdots \\
M_{(r+1)1} & M_{(r+1)2} & \cdots & M_{(r+1)(r+1)} \\
\end{array} \right)
$$
as a block matrix, where $M_{mn}$ are $\u \times \u$ matrices such that $M_{mn} = b_{mn} \times \Lambda(N')^{-1}$. 
\end{lem}
(Here, for a number $t$ and a matrix $B$, we define the matrix $tB$ as
$(tB)_{ij}:=t\cdot B_{ij}$.)

\vspace{2mm}
Before proving this lemma, we recall ``block matrix multiplication''. Let $A_{ij}$ be $\u \times \u$ matrices and let $B_k$ be $\u \times 1$ matrices for $1\leq i, j, k \leq \nu$. Also, let 
$$
X=\left( \begin{array}{cccc}
A_{11} & A_{12} & \cdots & A_{1\nu}\\
A_{21} & A_{22} & \cdots & A_{2\nu}\\
\vdots & \vdots & \ddots & \vdots \\
A_{\nu 1} & A_{\nu 2} & \cdots & A_{\nu \nu} \\
\end{array} \right)
\qa
Y=\left( \begin{array}{c}
B_{1} \\
B_{2} \\
\vdots \\
B_{\nu}\\
\end{array} \right)
$$
be an $(\u\nu)\times (\u\nu)$-matrix and a vector of size $(\u\nu)$, respectively. Then, we have
$$
X^2=\left( \begin{array}{cccc}
C_{11} & C_{12} & \cdots & C_{1\nu}\\
C_{21} & C_{22} & \cdots & C_{2\nu}\\
\vdots & \vdots & \ddots & \vdots \\
C_{\nu1} & C_{\nu2} & \cdots & C_{\nu\nu} \\
\end{array} \right)
\qa
XY=\left( \begin{array}{c}
D_{1} \\
D_{2} \\
\vdots \\
D_{\nu}\\
\end{array} \right)
$$
as block matrices, where 
\begin{equation*}
C_{ij} = \sum_{a=1}^\nu A_{ia} \times A_{aj} \qa D_k = \sum_{a=1}^\nu A_{ka} \times D_a.
\end{equation*}
The point is that if the matrices we want to multiply consist of block matrices that can be multiplied together, we can just multiply them as numbers. This simple method in linear algebra is very useful in the following computations.

\begin{proof}
By direct computation, for $d, \d \in \cS(N')$ we have
\begin{equation}
\begin{aligned}\label{eqn : a_N}
a_{N'q^r}(dq^i, \d q^j)&=\frac{N'q^r}{\gcd(dq^i, N'q^r/{dq^i})} \cdot \frac{\gcd(dq^i, ~\delta q^j)^2}{dq^i \cdot \delta q^j}\\
&=\left(\frac{N'}{\gcd(d, N'/{d})}\cdot \frac{\gcd(d, ~\delta)^2}{d \cdot \delta} \right) \times
\left( \frac{q^r}{\gcd(q^i, ~q^r/{q^i})} \cdot\frac{\gcd(q^i,~q^j)^2}{q^i \cdot q^j} \right) \\
&=a_{N'}(d, \d) \times a_{q^r}(q^i, q^j).
\end{aligned}
\end{equation}
Let $\cS(N')=\{ d_1, \dots, d_{\u} \}$. 
For an integer $1\leq a \leq \vp$, let $b$ (resp. $c$) be the quotient (resp. remainder) of $a-1$ modulo $\u$, i.e., $a-1=b\u+c$ with $0\leq c < \u$ (and $0\leq b \leq r$). We set $\d_a:=d_{c+1} \cdot q^b$. Then, we have $\cS(N)=\{ \d_1, \d_2, \dots, \d_\vp \}$ because any divisor of $N$ is of the form $d_{(c+1)} \cdot q^b$ for some $b$ and $c$ with $0\leq c < \u$ and $0\leq b \leq r$.

By Equation (\ref{eqn : a_N}), we can write $\Lambda(N)=\Lambda(N'q^r)$ as the following block matrix
$$
\Lambda(N'q^r)=\left( \begin{array}{cccc}
A_{11} & A_{12} & \cdots & A_{1(r+1)}\\
A_{21} & A_{22} & \cdots & A_{2(r+1)}\\
\vdots & \vdots & \ddots & \vdots \\
A_{(r+1)1} & A_{(r+1)2} & \cdots & A_{(r+1)(r+1)} \\
\end{array} \right),
$$
where $A_{mn} = \tau_{mn} \times \Lambda(N')$ and $\tau_{mn}=a_{q^r}(q^{m-1}, q^{n-1})=\frac{q^r}{\gcd(q^{m-1}, ~q^{r-m+1})}\cdot \frac{\gcd(q^{m-1}, ~q^{n-1})^2}{q^{m+n-2}}$. 
Since the product of two matrices
$$
\left( \tau_{mn} \right)_{\substack{1 \leq m \leq r+1 \\ 1 \leq n \leq r+1}} \quad\text{and}\quad 
\left( b_{mn} \right)_{\substack{1 \leq m \leq r+1 \\ 1 \leq n \leq r+1}}
$$
is equal to $\text{Id}_{(r+1)\times (r+1)}$ by Ling \cite[Prop. 3]{Lg97}, $(A_{mn}) \times (M_{mn}) = \text{Id}_{\u (r+1)\times \u (r+1)}=\text{Id}_{\vp \times \vp}$.
\end{proof}
Note that if $N'=1$ this is exactly the result of Ling.

\vspace{2mm}
Let $C_{M', N'}^{D'}=\sum_{i=1}^\u a(i) \cdot (P_{d_i})$ be a divisor of $X_0(N')$ (not necessarily of degree $0$), then by definition we have
\begin{equation*}
C_{M, N}^D  = \begin{cases}
\sum_{i=1}^\u a(i) \cdot (P_{d_i})-\sum_{i=1}^\u a(i) \cdot (P_{d_iq}) & \text{ if } a_q(M, N, D)=1, \\
\sum_{i=1}^\u a(i) \cdot (P_{d_i})  & \text{ if } a_q(M, N, D)=q, \\
\sum_{i=1}^\u (q-1)a(i) \cdot (P_{d_i})-\sum_{i=1}^\u a(i) \cdot (P_{d_iq})& \text{ if } a_q(M, N, D)=0.
\end{cases}
\end{equation*}
In their vector form (with respect to the indexing used in Lemma \ref{lem:inverseofLambda, induction}), as a block matrix we have 
\begin{equation*}
V(C_{M, N}^D) = \left( \begin{array}{c}
V(C_{M', N'}^{D'}) \\
-V(C_{M', N'}^{D'})
\end{array} \right)  \text{ if } a_q(M, N, D)=1.
\end{equation*}
(Note that we assume that $M$ is a divisor of $N^\sqf$.) Also, as block matrices we have
\begin{equation*}
V(C_{M, N}^{D}) = \left( \begin{array}{c}
V(C_{M', N'}^{D'}) \\
\text{\large 0} \\
\text{\large 0} \\
\vdots \\
\text{\large 0} \\
\end{array} \right) \qa
V(C_{M, N}^D) = \left( \begin{array}{c}
(q-1)V(C_{M', N'}^{D'}) \\
-V(C_{M', N'}^{D'})\\
\text{\large 0} \\
\vdots \\
\text{\large 0} \\
\end{array} \right)
\end{equation*}
if $a_q(M, N, D)=q$ and $a_q(M, N, D)=0$, respectively.

\ms
Using Lemma \ref{lem:inverseofLambda, induction}, we have the following.
\begin{lem} \label{lem : lemma 4.6}
If $a_q(M, N, D)=1$, then as a block matrix $R(M, N, D)$ is
$$
\frac{1}{q-1}\left( \begin{array}{c}
R(M', N', D') \\
-R(M', N', D')
\end{array} \right)
=\frac{1}{\cX_q(1)}\left( \begin{array}{c}
R(M', N', D') \\
-R(M', N', D')
\end{array} \right).
$$ 
Similarly, if $a_q(M, N, D)=q$ and $a_q(M, N, D)=0$, then as block matrices $R(M, N, D)$ are
\Small
$$
\frac{1}{\cY_q(r)}\left( \begin{array}{c}
q R(M', N', D') \\
- R(M', N', D') \\
\text{\small 0} \\
\text{\small 0} \\
\vdots \\
\text{\small  0} \\
\end{array} \right)
~\text{ and }~
\frac{1}{\cZ_q(r)}\left( \begin{array}{c}
q R(M', N', D') \\
-(q+1) R(M', N', D') \\
R(M', N', D') \\
\text{\small 0} \\
\vdots \\
\text{\small  0} \\
\end{array} \right),
$$
\normalsize
respectively.
\end{lem}
\begin{proof}
First, assume that $a_q(M, N, D)=1$. By definition and Lemma \ref{lem:inverseofLambda, induction}, we have
\begin{equation*}
\Lambda(N'q)=\bmat {q\Lambda(N')} {\Lambda(N')}{\Lambda(N')}
{q\Lambda(N')} \qa
\Lambda(N'q)^{-1}=\frac{1}{q^2-1}\bmat {q\Lambda(N')^{-1}} {-\Lambda(N')^{-1}}{-\Lambda(N')^{-1}}
{q\Lambda(N')^{-1}}.
\end{equation*}
Since $\Lambda(N')^{-1} V(C_{M', N'}^{D'})=R(M', N', D')$ by definition, we have
\begin{equation*}
\frac{1}{q^2-1}\bmat {q\Lambda(N')^{-1}} {-\Lambda(N')^{-1}}{-\Lambda(N')^{-1}}
{q\Lambda(N')^{-1}} \left( \begin{array}{c}
V(C_{M', N'}^{D'}) \\
-V(C_{M', N'}^{D'})
\end{array} \right) = \frac{1}{q^2-1}\left( \begin{array}{c}
(q+1)R(M', N', D') \\
-(q+1)R(M', N', D')
\end{array} \right),
\end{equation*}
which implies the first claim. The case for $a_q(M, N, D)=q$ with $r=1$ follows similarly.

Now, assume that $r\geq 2$. By Lemma \ref{lem:inverseofLambda, induction}, we have
\begin{equation*}
\Lambda(N'q^{r})^{-1} = 
\frac{1}{q^r(q^2-1)}\left(\begin{array}{ccc}
q^2\Lambda(N')^{-1} & -q^{2}\Lambda(N')^{-1}& \cdots \\
-q \Lambda(N')^{-1} & q(q^2+1)\Lambda(N')^{-1}& \cdots \\
0 & -q^{2}\Lambda(N')^{-1}& \cdots  \\  
0 & 0 & \ddots
\end{array} \right)
\end{equation*}
and hence the claim for the case with $a_q(M, N, D)=0$ holds true by the following computation:
\Small
\begin{equation*}
\left( \begin{array}{ccc}
q^{2}\Lambda(N')^{-1} & -q^{2}\Lambda(N')^{-1}& \cdots \\
-q \Lambda(N')^{-1} & q(q^2+1)\Lambda(N')^{-1}& \cdots \\
0 & -q^{2}\Lambda(N')^{-1}& \cdots  \\  
0 & 0 & \cdots \\
0 & 0 & \ddots
\end{array} \right)
\left( \begin{array}{c}
(q-1)V(C_{M', N'}^{D'}) \\
-V(C_{M', N'}^{D'}) \\
0 \\
\vdots \\
0
\end{array} \right)=\left( \begin{array}{c}
q^3 R(M', N', D') \\
-q^2(q-1)R(M', N', D') \\
q^2 R(M', N', D') \\
0 \\
\vdots
\end{array} \right).
\end{equation*}
\normalsize
Similarly, the claim for the case with $a_q(M, N, D)=q$ also follows.
\end{proof}

For a positive integer $d$ and a prime $p$, we set
$$
\addtolength{\arraycolsep}{-2.5pt}  
\renewcommand{\arraystretch}{1.1}%
x_{p}(d):= 
\left\{
 \begin{array}{cl}  
  1 & ~\text{if }  ~\mathrm{val}_p(d)=0 \\
  -1   & ~\text{if } ~\mathrm{val}_p(d) \geq 1
\end{array}
\right. 
\qa
y_{p}(d):= 
\left\{
 \begin{array}{cl}  
  p & ~\text{if }  \mathrm{val}_p(d)=0 \\
  -1   & ~\text{if } \mathrm{val}_p(d)=1\\
  0 & ~\text{if } \mathrm{val}_p(d)\geq 2. 
\end{array}
\right. 
$$
Also, we set
\begin{equation*} 
z_{p}(d):= 
\left\{
 \begin{array}{cl}  
  p & ~\text{if }  \mathrm{val}_p(d)=0 \\
  -(p+1)   & ~\text{if } \mathrm{val}_p(d)=1\\
  1   & ~\text{if } ~\mathrm{val}_p(d)=2 \\
  0  & ~\text{if } ~\mathrm{val}_p(d)\geq 3.
\end{array}
\right. 
\end{equation*}
Then, we have the following.

\begin{lem}\label{lem : S1 entries}
For a divisor $\d$ of $N$, we define
\begin{equation*}
\G(M, N, D)_\d:= \prod_{p \mid M} x_p(\d) \times \prod_{p \mid \frac{N^{\rrad}}{MD}} y_p(\d) \times \prod_{p \mid D} z_p(\d).
\end{equation*}
Then, for any $\d \in \cS(N)$ we have $R(M, N, D)_\d=\frac{24 \times \G(M, N, D)_\d}{\cN(M, N, D)}$, where $R(M, N, D)_\d$ is the entry of $R(M, N, D)$ corresponding to $\d$.
\end{lem}

\begin{proof}
Let $p(\d):=\mathrm{val}_q(\d)$ and $r(\d):=\d/{q^{p(\d)}} \in \cS(N')$.
As in Lemma \ref{lem:inverseofLambda, induction}, let $\cS(N')=\{d_1, \dots, d_\u \}$ and $\cS(N)=\cS(N'q^r)=\{ \d_1, \dots, \d_\vp \}$. (Here, for $0 \leq b \leq r$ and $1\leq c \leq \u$, we set $\d_{b\u+c}=d_c \cdot q^b$. Note that we have $p(\d_{b\u+c})=b$ and $r(\d)=d_c \in \cS(N')$.)
It is clear that
\begin{align*}
p(\d_a)=0 &\iff 1\leq a \leq \u, \\
p(\d_a)=1 &\iff \u < a \leq 2\u,\\
p(\d_a)=2 &\iff 2\u < a \leq 3\u,\\
p(\d_a)\geq 3 &\iff 3\u < a.  
\end{align*}

For a divisor $d$ of $N'$, we define the number $\G(M', N', D')_d$ as above. Then, by definition we have
\begin{equation}\label{eqn : 4.3}
\G(M, N, D)_\d=\begin{cases}
x_q(\d) \cdot \G(M', N', D')_{r(\d)} & \text{ if } a_q(M, N, D)=1,\\
y_q(\d) \cdot \G(M', N', D')_{r(\d)} & \text{ if } a_q(M, N, D)=q, \\
z_q(\d) \cdot \G(M', N', D')_{r(\d)} & \text{ if } a_q(M, N, D)=0.
\end{cases}
\end{equation}
Also, by definition we have
\begin{equation} \label{eqn : 4.4}
\frac{\cN(M, N, D)}{\cN(M', N', D')}=\begin{cases}
\cX_q(r) & \text{ if } a_q(M, N, D)=1, \\
\cY_q(r) & \text{ if }  a_q(M, N, D)=q, \\
\cZ_q(r) & \text{ if } a_q(M, N, D)=0. 
\end{cases}
\end{equation}

\ms
Suppose that $a_q(M, N, D)=0$. Then, by Lemma \ref{lem : lemma 4.6} we have
\begin{equation}\label{eqn : 4.5}
\cZ_q(r)\cdot R(M, N, D)_\d = \begin{cases}
q R(M', N', D')_{r(\d)} & \text{ if } p(\d)=0,\\
-(q+1) R(M', N', D')_{r(\d)} & \text{ if } p(\d)=1,\\
 R(M', N', D')_{r(\d)} & \text{ if } p(\d)=2,\\
0 & \text{ if } p(\d) \geq 3.
\end{cases}
\end{equation}
Suppose further that the above formula holds for the level $N'$. In other words, we assume that
\begin{equation*}
R(M', N', D')_d=\frac{24\times \G(M', N', D')_{d}}{\cN(M', N', D')} \text{ for all }~d \in \cS(N').
\end{equation*}
Then by Equations (\ref{eqn : 4.3}), (\ref{eqn : 4.4}) and (\ref{eqn : 4.5}), we have
\begin{equation*}
R(M, N, D)_\d = \frac{24\times \G(M, N, D)_\d}{\cN(M, N, D)} \text{ for all }~\d \in \cS(N),
\end{equation*}
as desired. Similar computations imply the remaining cases as well. Hence we obtain the result for the level $N$.

Note that when $M'=N'=D'=1$ then $C_{1, 1}=(P_1)$ and $\Lambda(1)=\frac{1}{24}$. Thus, $R(1,1,1)=24$ and the claim is vacuously true.\footnote{Indeed, if $N=p^k$ for some $k\geq 1$ then we can directly compute $R(M, p^k, D)$ via the same method as above, which is already obtained by Ling \cite{Lg97}.}
Thus, applying the same method inductively, we obtain the result.
\end{proof}

\vspace{3mm}
Now, we can compute all the quantities in the conditions listed above. 

\begin{lem}\label{lem : 4.8}
Let $\bd{r}=(\G(M, N, D)_\d) \in S_1(N)$. Then, we have the following.
\begin{enumerate}
\item
We have
\begin{equation*}
\sum_{\d \in \cS(N)} \G(M, N, D)_\d \cdot \d = \begin{cases}
(-1)^{\o(M)} \times \cN(M, N, D) & \text{ if } M=N,\\
0 & \text{ otherwise}.
\end{cases}
\end{equation*}
\item
We have
\begin{equation*}
\sum_{\delta \in \cS(N)} \G(M, N, D)_{\delta} \cdot (N/\delta)=\cN(M, N, D) \times \prod_{p \mid D} (p-1).
\end{equation*}

\item
We have
$$
\sum_{\d \in \cS(N)} \G(M, N, D)_\d  = 0.
$$
\item
We have
\begin{equation*}
\mathrm{val}_q \left(\prod_{\d \in \cS(N)} \d^{\G(M, N, D)_\d} \right)=\begin{cases}
-\prod_{p \mid \frac{N^{\rrad}}{q}} (p-1) & \text{ if } M=q \text{ and } D=1,  \\
-\prod_{p \mid N^{\rrad}} (p-1) & \text{ if } M=1 \text{ and } D=q, \\
0 &\text{ otherwise}.
\end{cases}
\end{equation*}
\end{enumerate}
\end{lem}

\ms
Note that since we assume that $\bd{r} \in S_1(N)$, we have $(M, D) \in \fS_0(N)$ and $N>1$.
\begin{proof}
We use the same notation as in the proof of Lemma \ref{lem : S1 entries}.
Also, for ease of notation we set $\G_\d=\G(M, N, D)_\d$ and $\D_d=\G(M', N', D')_d$.
Below, we will use Equation (\ref{eqn : 4.3}) without referring to it. Also, we use the fact that 
\begin{equation*}
\cS(N) = \coprod_{a=0}^r \cS(N')_a,
\end{equation*}
where $\cS(N')_a:=\{ dq^a : d \in \cS(N') \}$. 

\begin{enumerate}
\item
If $a_q(M, N, D)=q$, then we have
$$
\sum_{\delta \in \cS(N)} \G_{\delta} \cdot \delta=
\sum_{d \in \cS(N')}  (\G_{d} \cdot d+\G_{d q} \cdot d q)=\sum_{d \in \cS(N')}((q\D_d)\cdot d+(-\D_d)\cdot dq)=0.
$$
If $a_q(M, N, D)=0$, then we have
\begin{equation*}
\begin{split}
\sum_{\delta \in \cS(N)} \G_{\delta} \cdot \delta&=
\sum_{d \in \cS(N')}  (\G_{d} \cdot d+\G_{d q} \cdot d q+ \G_{dq^2} \cdot dq^2+\sum_{a=3}^r \G_{dq^a} \cdot dq^a)\\
&=\sum_{d \in \cS(N')}  ((q\D_d) \cdot d+(-(q+1)\D_d) \cdot d q+ (\D_d) \cdot dq^2+0)=0.
\end{split}
\end{equation*}
Thus, if there is a prime divisor $q$ of $N$ with $a_q(M, N, D) \neq 1$, then the sum is $0$. 
If $a_q(M, N, D)=1$ for all prime divisors $q$ of $N$, then we have $M=N$ and $D=1$
because we assume that $M$ is a divisor of $N^\sqf$.
Moreover, we have
$$
\sum_{\delta \in \cS(N)} \G_{\delta} \cdot \delta=
\sum_{d \in \cS(N')} (\G_d \cdot d + \G_{dq} \cdot dq)=\sum_{d \in \cS(N')} (\D_d \cdot d + (-\D_d)\cdot dq)
=(1-q)\cdot \sum_{d \in \cS(N')} \D_d \cdot d. 
$$
Applying the same method inductively, we obtain
$$
\sum_{\delta \in \cS(N)} \G_{\delta} \cdot \delta=\prod_{p \mid M} (1-p)=(-1)^{\o(M)} \times \cN(M, N, D).
$$

\item
Similarly as above, if $a_q(M, N, D)=1$ then we have
\begin{equation*}
\sum_{\delta \in \cS(N)} \G_{\delta} \cdot (N/\delta)
=\sum_{d \in \cS(N')}  (\G_{d} \cdot (N'/d)q+\G_{d q} \cdot (N'/d))
=(q-1) \cdot \sum_{d \in \cS(N')}  \D_{d} \cdot (N'/d).
\end{equation*}
If $a_q(M, N, D)=q$, then we have
\begin{equation*}
\begin{split}
\sum_{\delta \in \cS(N)} \G_{\delta} \cdot (N/\delta)
&=\sum_{d \in \cS(N')}  (\G_{d} \cdot (N'/d)q^r+\G_{d q} \cdot (N'/d)q^{r-1}+\cdots)\\
&=q^{r-1}(q^2-1) \cdot \sum_{d \in \cS(N')}  \D_{d} \cdot (N'/d).
\end{split}
\end{equation*}
If $a_q(M, N, D)=0$, then we have\footnote{The term $q^{r-2}(q^2-1)(q-1)$ also appears in the computation of the residues in Lemma \ref{lem : 5.8 new}.}
\begin{equation*}
\begin{split}
\sum_{\delta \in \cS(N)} \G_{\delta} \cdot (N/\delta)
&=\sum_{d \in \cS(N')}  (\G_{d} \cdot (N'/d)q^r+\G_{d q} \cdot (N'/d)q^{r-1}+\G_{dq^2} \cdot (N'/d)q^{r-2}+\cdots )\\
&=q^{r-2}(q-1)(q^2-1)\cdot \sum_{d \in \cS(N')}  \D_{d} \cdot (N'/d).
\end{split}
\end{equation*}
Thus, applying the same method inductively we get 
\begin{equation*}
\sum_{\delta \in \cS(N)} \G_{\delta} \cdot (N/\delta)=\cN(M, N, D) \times \prod_{p \mid D} (p-1).
\end{equation*}
\item
If $a_q(M, N, D)=1$, then we have
\begin{equation*}
\sum_{\delta \in \cS(N)} \G_{\delta}
=\sum_{d \in \cS(N')} (\G_\d+\G_{dq})=0.
\end{equation*}
Also, if $a_q(M, N, D)=0$, then we have
\begin{equation*}
\sum_{\delta \in \cS(N)} \G_{\delta}
=\sum_{d \in \cS(N')} (\G_\d+\G_{dq}+\G_{dq^2}+\cdots)=0.
\end{equation*}
On the other hand, if $a_q(M, N, D)=q$, then we have
\begin{equation*}
\sum_{\delta \in \cS(N)} \G_{\delta}
=\sum_{d \in \cS(N')} (\G_\d+\G_{dq}+\cdots)
=\sum_{d \in \cS(N')} (q\D_\d+(-\D_d)+0)=(q-1) \cdot \sum_{d \in \cS(N')} \D_d.
\end{equation*}
Thus, if $M=D=1$, then applying the same method inductively, we get
$$\sum_{\delta \in \cS(N)} \G_{\delta}=\prod_{p \mid N^{\rrad}} (p-1).$$
Note that if $M=D=1$ then the divisor $C_{M, N}^D$ is of degree $1$, and hence $\bd{r}=(\G_\d) \not\in S_1(N)$. Therefore we have $\sum_{\delta \in \cS(N)} \G_{\delta}=0$ if $\bd{r} \in S_1(N)$.
\item

Note that we have
\begin{equation*}
\prod_{\d \in \cS(N)} \d^{\G_\d}=\prod_{d  \in \cS(N')} ( d^{\G_d} \times (dq)^{\G_{d q}} \times (dq^2)^{\G_{d q^2}} \times \cdots )= A \times \prod_{d \in \cS(N')} q^{(\G_{dq}+2 \G_{dq^2}+\cdots)},
\end{equation*}
where $A$ is a positive integer prime to $q$. Note that
\begin{equation*}
\G_{dq}+2\G_{dq^2}+ \cdots = \begin{cases}
-\D_d & \text{ if } a_q(M, N, D)=1 \text{ or } q, \\
(1-q)\D_d & \text{ if } a_q(M, N, D)=0.
\end{cases}
\end{equation*}
By (3), $\sum_{d \in \cS(N')} \D_{d}=0$ if the degree of $C_{M', N'}^{D'}$ is $0$, and   
$\sum_{d \in \cS(N')} \D_d=\prod_{p \mid N'} (p-1)$ otherwise. Moreover, the degree of $C_{M', N'}^{D'}$ is not $0$ if and only if $MD=q$. Therefore, we have
\begin{equation*}
\mathrm{val}_q \left(\prod_{\d \in \cS(N)} \d^{\G_\d} \right)=\begin{cases}
-\prod_{p \mid \frac{N^{\rrad}}{q}} (p-1) & \text{ if } M=q \text{ and } D=1,  \\
-\prod_{p \mid N^{\rrad}} (p-1) & \text{ if } M=1 \text{ and } D=q, \\
0 &\text{ otherwise}.
\end{cases}
\end{equation*}
\end{enumerate}
\end{proof}

\begin{rem}\label{rem : 4.9}
Recall the definition of $h=h(M, N, D)$. If one of the following holds then we set $h=2$, and $h=1$ otherwise:
\begin{enumerate}[(i)]
\item
$N=M$ is a prime.
\item
$N=2^k M$ with $M$ an odd prime for $k\geq 1$, and $D=1$.
\item
$M=1$, $N=2^k$ for $k\geq 2$ and $D=2$.
\end{enumerate}
By (4) in the lemma above, $h=2$ if and only if $\prod_{\d \in \cS(N)} \d^{\G_\d}$ is not a square of a rational number. Thus, if $h=1$ then the condition (4) is automatically satisfied.
\end{rem}

Finally, we compute the order of $\cC_{M, N}^D$. Assume that $(\k R(M, N, D)_\d) \in S_1(N)$ satisfies all the conditions above. 
We want to find the smallest positive integer $\k$, which is the order of $\cC_{M, N}^D$.
By the condition (0) we have 
$$\k=\frac{\cN(M, N, D)}{24} \times \g$$ 
for some integer $\g\geq 1$ because $\G(M, N, D)_\d=(-1)^{\o(N^{\rrad}/D)}$ where $\d=N^{\rrad} \cdot D$. 
Since $\k$ is also an integer, $\g \cdot \cN(M, N, D)$ is a multiple of $24$, i.e., 
\begin{equation*}
\g=\frac{24}{\gcd(24, ~\cN(M, N, D))} \times g 
\end{equation*}
for some integer $g\geq 1$. Therefore to find the smallest $\k$, it suffices to find the minimum of $g$.
By Lemma \ref{lem : S1 entries}, we have $\k R(M, N, D)_\d= \g \G(M, N, D)_\d$ and hence $(\g\G(M, N, D)_\d) \in S_1(N)$ satisfies all the conditions above. Now, we use Lemma \ref{lem : 4.8}. Since $\g \cdot \cN(M, N, D)$ is a multiple of $24$, the conditions (1) and (2) are automatically fulfilled. Also, the condition (3) always hold. 
By Remark \ref{rem : 4.9}, if $h=1$ then the condition (4) is also automatically true and hence the minimum of $g$ is 1. So, the order of $\cC_{M, N}^D$ is
\begin{equation*}
 \frac{\cN(M, N, D)}{24} \times \frac{24}{\gcd(24, ~\cN(M, N, D))} \times 1 =\frac{\cN(M, N, D)}{\gcd(24, ~\cN(M, N, D))},
 \end{equation*}
which is the numerator of $\frac{\cN(M, N, D)}{24}$.

\ms
Now, let $h=2$. Then, $\prod_{\d \in \cS(N)} \d^{\G_\d}$ is not a square, so $\g$ must be even by the condition (4). If $G(M, N, D):=\frac{24}{\gcd(24,~\cN(M, N, D))}$ is already even, then the minimum of $g$ is $1$ as above, and otherwise it is $2$.

{\bf Case (i) } $M=N=q$ is a prime and $D=1$.  

By direct computation, we have
\begin{equation*}
\frac{\cN(q, q, 1)}{24}=\frac{q-1}{24} \qa G(q, q, 1)=\begin{cases}
\text{odd} & \text{ if } q \equiv 1 \pmod 8 \\
\text{even} & \text{ if } q \not\equiv 1 \pmod 8.
\end{cases}
\end{equation*}
Thus, the order of $\cC_{q, q}$ is the numerator of $\frac{q-1}{12}=\frac{\cN(q, q, 1)}{12}$.

{\bf Case (ii) } $M=q$ is an odd prime, $N=2^k q$ for some $k\geq 1$ and $D=1$.
 
By direct computation, we have $\cN(q, 2^k q, 1)=2^{k-1}\cdot 3(q-1)$ and $G(q, 2^k q, 1)=2^a$ for some $0\leq a \leq 3$.
Moreover, we have
\begin{align*}
G(q, 2q, 1)=1 &\iff q\equiv 1 \pmod 8\\
G(q, 4q, 1)=1 &\iff q\equiv 1 \pmod 4\\
G(q, 2^k q, 1)=1 & \quad \text{ if } k \geq 3.
\end{align*}
Therefore the order of $\cC_{q, 2^k q}^1$ is the numerator of $2^{k-3}(q-1)=\frac{\cN(q, 2^k q, 1)}{12}$.

{\bf Case (iii) } $M=1$, $D=2$ and $N=2^k$ for some $k\geq 2$. 

If $k=2$ or $3$ then the genus of $X_0(2^k)$ is 0 and hence the order of $\cC_{1, 2^k}^2$ is automatically $1$. By direct computation, we have 
$\cN(1, 2^k, 2)=3\cdot 2^{k-2}$. Also we have 
$G(1, 2^k, 2)=1$ if $k\geq 5$, and $G(1, 16, 2)=2$.
Therefore the minimum of $g$ is $2$ for all $k\geq 4$ and hence the order of $\cC_{1, 2^k}^2$ for $k\geq 2$ is the numerator of $2^{k-4}=\frac{\cN(1, 2^k, 2)}{12}$.

In summary, if $h=2$ then the order of $\cC_{M, N}^D$ is the numerator of 
$$
\frac{\cN(M, N, D)}{12}=\frac{h\times \cN(M, N, D)}{24},
$$
as claimed.
\end{proof}

\ms
\subsection{The Hecke action on $\br{\cC^D_{M, N}}$}
As above, let $(M, D) \in \fS_0(N)$, i.e., $M$ is a divisor of $N^{\rrad}$ and $D$ is a divisor of $N^{\sqq}$ prime to $D$ satisfying $MD \neq 1$. 
\begin{thm}\label{thm : hecke action on C_M, N}
The ideal $\cI_{M, N}^D$ annihilates $\br {\cC_{M, N}^D}$. In other words, for any prime $p$, we have
\begin{equation*}
T_p(\cC_{M, N}^D)=a_p(M, N, D)\cdot \cC_{M, N}^D.
\end{equation*}

\end{thm}
\begin{proof}
We first assume that $M$ divides $N^\sqf$, and set notation as follows.
Let $p$ be a prime number and let $r:=\mathrm{val}_p(N)$. 
Let $d$ be a divisor of $N/p^r$ and let $t:=\gcd(d, N/d)$. 
Let 
\begin{equation*}
\cA=\{ n \in \Z : 1\leq n \leq t, ~\gcd(n, t)=1 \} \qa \cB = \{ n \in \Z : 1 \leq n \leq pt, ~\gcd(n, pt)=1 \}.
\end{equation*}
For each $n\in \cA \cup \cB$, let
\begin{equation*}
x(n) =\begin{cases}
x & \text{ if } \gcd(n, p)=1, \\
x+d & \text{ otherwise}. \\
\end{cases}
\end{equation*}
(So, we have $x(n) \in \nabla(dp)$ and $x(n) \equiv n \pmod {t}$.) For each $n \in \cA$, let $\cB_n$ be the subset of $\cB$ consisting of elements congruent to $n$ modulo $t$. (Note that $\# \cB_n=p-1$ for any $n \in \cB_n$.)
Finally, we choose a prime $p^*$ so that $p^*p \equiv 1 \pmod d$, which is possible because $\gcd(p, d)=1$.

By the discussion in Section \ref{sec : the cusp}, we have $(P_d)=\sum_{n\in \cA} \vect {x(n)} d$ and $(P_{dp^r})=\sum_{n \in \cA} \vect {x(n)} {dp^r}$. Also, since $\gcd(p p^*, t)=1$, we get
 \begin{equation} \label{eqn : sum P_d}
(P_d)=\sum_{n \in \cA} \vect {x(n)} d = \sum_{n \in \cA} \vect {p x(n)}d=\sum_{n \in \cA} \vect {p^* x(n)}d
\end{equation}
and
\begin{equation} \label{eqn : sum P_pd}
(P_{dp})=\begin{cases}
\sum_{n \in \cA} \vect {x(n)}{dp}=\sum_{n \in \cA} \vect {p^* x(n)}{dp} &\text{ if } r=1,\\
\sum_{n \in \cB} \vect {x(n)}{dp}=\sum_{n \in \cA} \sum_{y \in \cB_n} \vect {y}{dp} &\text{ if } r\geq 2.
\end{cases}
\end{equation}

Let $T_p=\beta_p(N)_* \circ \alpha_p(N)^* : \Div (X_0(N)) \rightarrow \Div (X_0(N))$, which induces the map $T_p$ on $J_0(N)$.
Below, we use Equations (\ref{equation : alpha cusp}) and (\ref{equation : beta cusp}) without referring to them.
\begin{itemize}
\item
Case 1 : $r=0$. 
For $n \in \cA$, we have 
$$
\beta_p(N)_*\circ \alpha_p(N)^*\vect {x(n)}{d} = \beta_p(N)_* \left( p \vect{x(n)}{d}+\vect{p^* x(n)}{dp} \right) = p \vect {px(n)}{d}+\vect {p^* x(n)}{d}.
$$
Therefore $\Delta_p ((P_d))=(p+1) \cdot (P_d)$ by Equation (\ref{eqn : sum P_d}).
\item
Case 2 : $r=1$. Let $n \in \cA$. Then, we have
$$
\beta_p(N)_*\circ \alpha_p(N)^*\vect {x(n)}{d}=\beta_p(N)_* \left( p \vect{x(n)}{d} \right) = p \vect {px(n)}{d}.
$$
Let $y \in \cB_n$. Since $y \equiv x(n) \pmod t$, $\vect y d = \vect {x(n)}{d}$ and $\vect y {dp}=\vect {x(n)}{dp}$
as cusps of $X_0(N)$. Therefore, we get
\begin{equation*}
\beta_p(N)_*\circ \alpha_p(N)^*\vect {x(n)}{dp}=\beta_p(N)_* \left( \sum_{y \in \cB_n} \vect{y}{dp} +\vect {p^* x(n)}{dp^2} \right) 
= (p-1) \vect {x(n)}{d}+ \vect{p^* x(n)}{dp}.
\end{equation*}
Thus, $\Delta_p((P_d))=p \cdot (P_d)$ and $\Delta_p((P_{dp}))=(p-1)\cdot (P_d)+(P_{dp})$ by Equations (\ref{eqn : sum P_d}) and (\ref{eqn : sum P_pd}).

\item
Case 3 : $r\geq 2$. Let $n \in \cA$. Then, we have
$$
\beta_p(N)_*\circ \alpha_p(N)^*\vect {x(n)}{d}=\beta_p(N)_* \left( p \vect{x(n)}{d} \right) = p \vect {px(n)}{d}.
$$
Also, for $y \in \cB_n$, we have
$$
\beta_p(N)_*\circ \alpha_p(N)^*\vect {y}{dp}=\beta_p(N)_* \left( p \vect{y}{dp} \right) = p\vect {y}{d}=p\vect {x(n)} d
$$
since $y \equiv x(n) \pmod t$.
Therefore $\Delta_p((P_d))=p\cdot (P_d)$ and $\Delta_p((P_{pd}))=p(p-1)\cdot (P_d)$ by Equations (\ref{eqn : sum P_d}) and (\ref{eqn : sum P_pd}).
\end{itemize}
 
Thus, we have the following.
\begin{itemize}
\item
Case 1 : $r=0$. Since $\Delta_p((P_d))=(p+1)\cdot (P_d)$ for any $d \mid N$, we have 
$$
T_p(\cC_{M, N}^D)=(p+1) \cdot \cC_{M, N}^D.
$$

\item
Case 2 : $r=1$ and $p \mid M$. Since $\Delta_p( (P_d)-(P_{dp})) = (P_d)-(P_{dp})$ for any $d \mid \frac{N}{p}$, we have 
$$
T_p(\cC_{M, N}^D)=\cC_{M, N}^D.
$$
\item
Case 3 : $r \geq 2$ and $p \mid D$. 
Since $\Delta_p((p-1)\cdot (P_d)-(P_{dp}))=0$ for any $d \mid \frac{N}{p^r}$, we have 
$$
T_p(\cC_{M, N}^D)=0.
$$

\item
Case 4 : $r\geq 1$ and $p \nmid MD$. Since $\Delta_p((P_d))=p \cdot (P_d)$ for any $d \mid \frac{N}{p^r}$, we have 
$$
T_p(\cC_{M, N}^D)=p \cdot \cC_{M, N}^D.
$$
\end{itemize}
Therefore the claim follows if $M$ divides $N^\sqf$.

If $\gcd(M, N^{\sqq}) \neq 1$, then by Proposition \ref{prop : the maps [p]} and Remark \ref{rem : eigenvalue}, 
we can reduce to the cases above, and hence we obtain the result.
\end{proof}

\subsection{Proof of Theorem \ref{thm:corollary}}\label{sec : the map [N] on the cuspidal divisors}
In this subsection, we prove Theorem \ref{thm:corollary}.
\begin{lem}\label{lem : 4.11}
We have $[N]_p^\bullet(\cC_{M, N})=\cN \cdot \cC_{M', Np}$, i.e., 
\begin{equation}
\begin{cases}
[N]^-_p(\cC_{M, N})=(p+1) \cdot \cC_{M/p, Np} & \text{if } p \mid M,\\
[N]^+_p(\cC_{M, N})=\cC_{M, Np} & \text{if } p \mid \frac{N^{\sqf}}{M}, \\
[N]_p(\cC_{M, N})=p \cdot \cC_{M, Np} & \text{if } p \mid N^{\sqq}.
\end{cases}
\end{equation}
\end{lem}
\begin{proof}
This follows from a similar computation as in the proof of Theorem \ref{thm : hecke action on C_M, N}. 

Let $r=\mathrm{val}_p(N)$ and let $d$ be a divisor of $N'=N/{p^r}$. From the discussion in Section \ref{sec : the cusp}, since we assume that $r\geq 1$, we have
\begin{equation*}
\a_p(N)^*((P_d))=p\cdot (P_d) \qa \b_p(N)^*((P_d))=(P_d)+(P_{dp}).
\end{equation*}
Also, we have
\begin{equation*}
\a_p(N)^*((P_{dp}))=
\begin{cases}
(P_{dp})+(P_{dp^2}) & \text{ if } r=1,\\
p\cdot (P_{dp}) & \text{ if } r\geq 2
\end{cases}
 \qa \b_p(N)^*((P_{dp}))=\begin{cases}
p\cdot (P_{dp^2}) & \text{ if } r \leq 2,\\
(P_{dp^2})& \text{ if } r\geq 3.
\end{cases}
\end{equation*}
Let $C_{M', N'}=\sum_{d \mid N'} a(d)\cdot (P_d)$, where $M'=\frac{M}{(M, ~p)}$. (This divisor may not be of degree $0$ though.) Then by definition we have
\begin{equation*}
C_{M, N}=\begin{cases}
\sum_{d \mid N'} a(d)\cdot [(P_d) - (P_{dp})] & \text{ if } p \mid M,\\
\sum_{d \mid N'} a(d)\cdot (P_d) & \text{ if } p \mid \frac{N^{\sqf}}{M}, \\
\sum_{d \mid N'} a(d) \cdot [(p-1)\cdot (P_d)-(P_{dp})]  & \text{ if } p \mid N^{\sqq}
\end{cases}
\end{equation*}
and
\begin{equation*}
C_{M', Np}=\sum_{d \mid N'}  a(d)\cdot [(p-1)\cdot (P_d)-(P_{dp})] .
\end{equation*}
Since $r=1$ for a prime divisor $p$ of $N^\sqf$ and $r\geq 2$ otherwise, we have
\begin{equation*}
\a_p(N)^*(C_{M, N})=\begin{cases}
\sum_{d \mid N'} a(d)\cdot [p\cdot (P_d) - (P_{dp})-(P_{dp^2})] & \text{ if } p \mid M,\\
\sum_{d \mid N'} p\cdot a(d)\cdot (P_d) & \text{ if } p \mid \frac{N^{\sqf}}{M}, \\
\sum_{d \mid N'} p\cdot a(d)\cdot [(p-1)\cdot (P_d)-(P_{dp})]  & \text{ if } p \mid N^{\sqq},
\end{cases}
\end{equation*}
and
\begin{equation*}
\b_p(N)^*(C_{M, N})=\begin{cases}
\sum_{d \mid N'} a(d)\cdot [(P_d) + (P_{dp})-p\cdot (P_{dp^2})] & \text{ if } p \mid M,\\
\sum_{d \mid N'} a(d)\cdot [(P_d) + (P_{dp})] & \text{ if } p \mid \frac{N^{\sqf}}{M}, \\
\sum_{d \mid N'} a(d)\cdot [(p-1)\cdot (P_d)+(p-1)\cdot (P_{dp})-p\cdot (P_{dp^2})]  & \text{ if } p \mid N^{\sqq} \text{ and } r=2\\
\sum_{d \mid N'} a(d)\cdot [(p-1)\cdot (P_d)+(p-1)\cdot (P_{dp})-(P_{dp^2})]  & \text{ if } p \mid N^{\sqq} \text{ and } r\geq 3.
\end{cases}
\end{equation*}
By the definition of $[N]_p^\bigstar$, the result follows.
\end{proof}

Thus, we have proved the first claim. 
Now, let $m$ and $n$ be the orders of $[N]_p^\bullet(\cC_{M, N})$ and $\cC_{M, N}$, respectively. Then, we have a canonical isomorphism
\begin{equation*}
K \simeq \< \cC_{M, N} \> [n/m].
\end{equation*}
Since $\<\cC_{M, N}\>[k]=\<\cC_{M, N}\>[\gcd(n, k)]$, the proof of Theorem \ref{thm:corollary} is completed by the following lemma.
(Note that the last claim immediately follows from the previous claim.)
\begin{lem}
Let $h=\frac{h(M, N, N^{\sqq})}{h(M', Np, (Np)^{\sqq})}$. Then $h$ is either $1$ or $2$. Also, $h=2$ if and only if either $(M, N)=(q, q)$ or $(M, N)=(q, 2q)$ for an odd prime $q$.
In any case, the order of $\cN\cdot \cC_{M', Np}$ is $\frac{n}{\gcd(n, h)}$.
\end{lem}
\begin{proof}
By Lemma \ref{lem : 4.11}, we have $[N]_p^\bullet(\cC_{M, N})=\cN \cdot \cC_{M', Np}$. Thus, we are going to prove that the order of $\cN \cdot \cC_{M', Np}$ is $\frac{n}{\gcd(n, h)}$. 
In the course of the proof, the claims for $h$ will follow.

First, assume that $h(M, N, N^{\sqq})=1$. Since $p^2$ divides $Np$ (and $D=N^{\sqq}$), we always have $h(M', Np, (Np)^{\sqq})=1$ and hence $h=1$.  By direct computation, we have $\cN(M', Np, (Np)^{\sqq})=\cN \cdot \cN(M, N, N^{\sqq})$, and  the order of $\cC_{M', Np}$ is
\begin{equation*}
\frac{\cN \cdot \cN(M, N, N^{\sqq})}{\gcd(24, \cN \cdot \cN(M, N, N^{\sqq}))} (=:\cM)
\end{equation*}
by Theorem \ref{thm : order of C_M, N}. Since the order of $\cC_{M', Np}$ is $\cM$, the order of $\cN \cdot \cC_{M', Np}$ is 
$\frac{\cM}{\gcd(\cN, \cM)}$. Therefore by Lemma \ref{lem : ABC} (take $A=\cN(M, N, N^{\sqq})$, $B=\cN$ and $C=24$), the order of $\cN \cdot \cC_{M', Np}$ is 
\begin{equation*}
\frac{\cN(M, N, N^{\sqq})}{\gcd(24, \cN(M, N, N^{\sqq}))},
\end{equation*}
which is equal to $n$ by Theorem \ref{thm : order of C_M, N}. 

Next, we assume that $h(M, N, N^{\sqq})=2$ and $N=2^k$ for some $k\geq 1$. Then, we have $(M, N)=(2, 2)$ or $(1, 2^k)$ with $k\geq 2$. 
Since $p$ is a divisor of $N$, we have $p=2$ and hence $h(M', Np, 2)=h(1, 2^{k+1}, 2)=2$ and $h=1$. 
By direct computation, the order of $\cN \cdot \cC_{M', Np}$ is equal to the numerator of $2^{k-4}$, which is equal to $n$.

Finally, we assume that $h(M, N, N^{\sqq})=2$ and $N$ is not a power of $2$. Let $q$ be an odd prime divisor of $N$. Then, $(M, N)=(q, q)$ or $(q, 2q)$. By direct computation, we have
\begin{equation*}
h(1, q^2, q)=h(q, 4q, 2)=h(1, 2q^2, q)=1,
\end{equation*}
and hence $h=2$. If $(M, N)=(q, q)$ then the order of $(q+1)\cdot \cC_{1, q^2}$ is the numerator of $\frac{q-1}{24}$, which is $\frac{n}{\gcd(n, 2)}$ because $n$ is the numerator of $\frac{q-1}{12}$. 
If $(M, N)=(q, 2q)$ then either $p=q$ or $p=2$. By direct computation, the orders of $(q+1) \cdot \cC_{1, 2q^2}$ and $\cC_{q, 4q}$ are equal to the numerator of $\frac{q-1}{8}$, which is $\frac{n}{\gcd(n, 2)}$ because $n$ is the numerator of $\frac{q-1}{4}$. 
\end{proof}

\begin{lem}\label{lem : ABC}
Let $A, B$ and $C$ be positive integers. Then, we have
\begin{equation*}
\frac{AB}{\gcd(C, AB)}\times \frac{1}{\gcd(B, \frac{AB}{\gcd(C, AB)})}=\frac{A}{\gcd(C, A)}.
\end{equation*}
\end{lem}
\begin{proof}
Let $X=\frac{AB}{\gcd(C, AB)}$, $Y=\gcd(B, X)$ and $Z=\frac{A}{\gcd(C, A)}$.
To prove the equality, it suffices to show that $\mathrm{val}_p(X/Y)=\mathrm{val}_p(Z)$ for each prime divisor $p$ of $AB$.
Therefore, we can assume that $A=p^a, B=p^b$ and $C=p^c$ for some integers $a, b, c \geq 0$. Note that
\begin{equation*}
\mathrm{val}_p (X) = a+b-\min \{a+b, c\}, \mathrm{val}_p (Y)=\min \{b,  \mathrm{val}_p(X)\} \text{ and } \mathrm{val}_p(Z)=a-\min \{a, c\}.
\end{equation*}
Suppose that $a+b\geq c$. Then $\mathrm{val}_p(X)=a+b-c$ and $b\leq b+(a-c)$. Thus, we have 
 $\mathrm{val}_p(XY)=\max \{0, a-c \}=\mathrm{val}_p(Z)$. Suppose that $a+b < c$. Then, $a < c$ and hence we have
 $\mathrm{val}_p(X)=\mathrm{val}_p(Y)=\mathrm{val}_p(Z)=0$ as desired.
\end{proof}

\ms
\section{The Eisenstein series $\cE^D_{M, N}$} \label{sec : Eisenstein series}
In this section, for $(M, D) \in \fS_0(N)$ we define an Eisenstein series $\cE^D_{M, N}$ and compute its residues at various cusps.

\subsection{Definition}
Let $\cE_{p, p}$ be the unique Eisenstein series of weight $2$ for $\Gamma_0(p)$ whose $q$-expansion\footnote{This $q$-expansion is the power series $e'$ on \cite[p. 78]{M77}.} is
\begin{equation*}
1-p-24\sum_{m =1}^\infty \s'(m) \cdot q^m,
\end{equation*}
where $\s'(m)$ is the sum of the divisors of $m$ which are prime to $p$.
For $k\geq 2$, let $\cE_{1, p^k}$ be the Eisenstein series of weight $2$ for $\Gamma_0(p^k)$ defined by
$$
\cE_{1, p^k}(z):=[p^{k-1}]_p \circ \cdots \circ [p^2]_p \circ [p]^-_p (\cE_{p, p})(z)=p\cdot (\cE_{p, p}(z)-\cE_{p, p}(pz)).
$$ 

Before defining an Eisenstein series $\cE_{M, N}^D$ for any $(M, D) \in \fS_0(N)$, we prove the following lemma.
\begin{lem}\label{lem : 5.1}
Let $(M, N) \in \fS_0(N)$. Then the following holds.
\begin{enumerate}
\item
If $N=p$ is prime, then $(M, D)= (p, 1)$.
\item
If $N=p^k$ is a prime power for $k\geq 2$, then $(M, D)$ is either $(p, 1)$ or $(1, p)$.
\item
Assume that $N$ is not a prime power, i.e., $N$ is divisible by at least two primes. Then, there is a prime divisor $q$ of $N$ such that $(M', D') \in \fS_0(N')$, where 
\begin{equation*}
M':=\frac{M}{\gcd(M, q)}, D':=\frac{D}{\gcd(D, q)} \qa N':=\frac{N}{q^{\mathrm{val}_q(N)}}.
\end{equation*}
\end{enumerate}
\end{lem}
\begin{proof}
The first two claims easily follow by definition. 

Assume that $N$ is divisible by at least two primes, i.e., $N^{\rrad}$ is divisible by at least two primes. Since $(M', D') \in \fS(N')$ by definition for any prime divisor $q$ of $N$, 
it suffices to show that $M'D' \neq 1$ for some divisor $q$ of $N$. 
Let $q$ be a prime divisor of $N$. If $q$ does not divide $MD$, then we have $M'D'=MD \neq 1$, so we may assume that $N^{\rrad}=MD$. Then, since $M'D'=MD/q=N^{\rrad}/q$ by direct computation, we have $M'D'\neq 1$ because $N^{\rrad}$ is divisible by at least two primes.
\end{proof}

Now, we can inductively define an Eisenstein series $\cE^D_{M, N}$ of weight $2$ for $\Gamma_0(N)$ as follows.

\begin{defn}\label{defn : eisenstein series}
Let $(M, D) \in \fS_0(N)$. 
\begin{enumerate}
\item
If $N=p^k$, then $\cE^{p}_{1, p^k}:=\cE_{1, p^k}$ and $\cE^{1}_{p, p^k}(z) := \cE_{p, p}(z)$.
\item
Let $q$ be a prime divisor of $N$ such that $(M', D') \in \fS_0(N')$. (Such a prime divisor always exists by Lemma \ref{lem : 5.1}.) Let $r=\mathrm{val}_q(N) \geq 1$. Assume that $\cE=\cE_{M', N'}^{D'}$ is defined. Then, we define the Eisenstein series $\cE_{M, N}^D$ as follows. 
\begin{equation}\label{eqn : ES}
\cE_{M, N}^D (z) := \begin{cases}
\cE(z)-q\cdot \cE(qz) & \text{ if } a_q(M, N, D)=1, \\
q\cdot (\cE(z)-\cE(qz)) & \text{ if } a_q(M, N, D)=q,\\
q\cdot (\cE(z)-(q+1)\cdot \cE(qz)+q\cdot \cE(q^2 z))& \text{ if } a_q(M, N, D)=0.\\
\end{cases}
\end{equation}
\end{enumerate}
Therefore the definition of $\cE_{M, N}^D$ is completed for any $(M, D) \in \fS_0(N)$ by induction.
\end{defn}

If $D=N^{\sqq}$, we frequently denote by $\cE_{M, N}$ the Eisenstein series $\cE_{M, N}^D$.
\begin{rem} \label{rem : 5.3}
If we denote by $\cE_0$ the Eisenstein series of weight $2$ and level $1$ whose $q$-expansion\footnote{This $q$-expansion is the power series $e$ on \cite[p. 78]{M77}.} is 
\begin{equation*}
1-24\sum_{m=1}^\infty \s(m) \cdot q^m,
\end{equation*}
where $\s(m)$ is the sum of the divisors of $m$, 
and we set $\cE^1_{1, 1}(z):=\cE_0(z)$. Then, the formula (\ref{eqn : ES}) above is coherent when $N'=1$ because $\cE_{p, p}=[1]_p^+(\cE_0)$. Moreover we can allow the case with $(M', D') \not\in \fS_0(N')$ and still the definition is the same.
Nevertheless, we split the definition as above not to make a confusion because $\cE^1_{1, 1}(z)$ is not a genuine modular form (cf. \cite[Defn. 2.5]{Yoo1}). Also, $\cE_{M', N'}^{D'}$ may not be a genuine modular form if $(M', D') \not\in \fS_0(N')$.
\end{rem}

\begin{prop}\label{prop : eigenvalues of eisenstein series}
Let $(M, D) \in \fS_0(N)$. Then, the ideal $\cI_{M, N}^D$ annihilates $\cE_{M, N}^D$. In other words, for any prime $p$, we have
\begin{equation*}
T_p(\cE_{M, N}^D)=a_p(M, N, D)\cdot \cE_{M, N}^D.
\end{equation*}
\end{prop}
\begin{proof}
Let $E_2(N)$ be the space of Eisenstein series of weight $2$ and level $N$. Then, the dimension of $E_2(N)$ is the number of cusps of $X_0(N)$ minus one (cf. \cite[\textsection 4.1]{DS05}).
Thus, if $N=q$ is a prime, then $E_2(q)$ is spanned by $\cE_{q, q}$. Since $E_2(q)$ is stable by the Hecke operator $T_p$ for any prime $p$, so $\cE_{q, q}$ must be an eigenform. The eigenvalue of $T_p$ on $\cE_{q, q}$ is then computed by the $q$-expansion of $\cE_{q, q}$ (cf. \cite[Prop. 5.3.1]{DS05}), which proves the claim for this case.

Note that $\cE_{1, q^2}$ is also an eigenform for all the Hecke operators $T_p$. Indeed, since $\cE_{1, q^2}=[q]_q^-(\cE_{q, q})$, by Proposition \ref{prop : the maps [p]} and Remark \ref{rem : eigenvalue} the claim follows. For $r\geq 2$, since 
\begin{equation*}
\cE_{1, q^r}=[q^{r-1}]_q \circ \cdots \circ [q^2]_q (\cE_{1, q^2}) \text{ and } 
\cE_{q, q^r}^1=[q^{r-1}]_q \circ \cdots \circ [q]_q (\cE_{q, q}),
\end{equation*}
we obtain the result when $N$ is a prime power by the same argument.

Assume that $N$ is not a prime power. Then, by Lemma \ref{lem : 5.1} there is a prime divisor $q$ of $N$ such that $(M', D') \in \fS_0(N')$. Suppose that we have the result for $\cE=\cE_{M', N'}^{D'}$. Then, by the same argument as above we obtain the result for $\cE_{M, N}^D$
because we have the following alternative description of $\cE_{M, N}^D$.
\begin{equation}\label{eqn : ES alternative}
\cE_{M, N}^D = \begin{cases}
[N'q^{r-1}]_q \circ \cdots \circ [N'q^2]_q \circ [N'q]_q \circ [N']_q^+ (\cE) & \text{ if } a_q(M, N, D)=1 \\
[N'q^{r-1}]_q \circ \cdots \circ [N'q^2]_q \circ [N'q]_q \circ [N']_q^- (\cE) & \text{ if } a_q(M, N, D)=q, \\
[N'q^{r-1}]_q \circ \cdots \circ [N'q^2]_q \circ [N'q]_q^- \circ [N']_q^+ (\cE)  & \text{ if } a_q(M, N, D)=0. \\
\end{cases}
\end{equation}
By induction, the proposition follows.
\end{proof}

\ms
\subsection{The residues of $\cE^D_{M, N}$ at various cusps}
Let $(M, D) \in \fS_0(N)$. We then compute the residues of $\cE^D_{M, N}$ at cusps as meromorphic differentials (cf. \cite[pp. 86--87]{M77} or \cite[Prop. 4.2]{Yoo3}).
\begin{thm} \label{thm : residue}
If $M \neq N^{\rrad}$, then the residue of $\cE^D_{M, N}$ at $i\infty =\vect 1 N$ is $0$, and it is $\prod_{p \mid N} (1-p)$ if $M=N^{\rrad}$.
Also, the residues of $\cE^D_{M, N}$ at any cusp of level $D$ and of level $\ell D$ are 
$$
(-1)^{\omega(D)}\cdot \cN(M, N, D) \qa (-1)^{\omega(D)+1} \cdot \cN(M, N, D),
$$
respectively, where $\ell$ is a prime divisor of $M$ not dividing $N^{\sqq}$.
\end{thm}
In the theorem, we only state the result necessary in Section \ref{sec : the index}. For a complete description of the residues of $\cE_{M, N}^D$ at all cusps, see Lemma \ref{lem : 5.8 new}.
\begin{proof}
We start with a well-known lemma in the theory of functions on compact Riemann surfaces.
\begin{lem}\label{lem : riemann surfaces}
Let $\phi$ be a non-constant holomorphic map between compact Riemann surfaces $C_1$ and $C_2$. Let $\omega$ be a meromorphic differential on $C_2$. Then, the residue of $\phi^*(\omega)$ at $x$ is the product of the ramification index of $\phi$ at $x$ and the residue of $\omega$ at $\phi(x)$.
\end{lem}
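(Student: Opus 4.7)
The plan is to reduce the statement to a purely local computation at $x$, using a convenient choice of local coordinates. First, I would pick a local uniformizer $w$ on $C_2$ centered at $y := \phi(x)$ and a local uniformizer $z$ on $C_1$ centered at $x$ such that $\phi$ has the normal form $\phi^*(w) = z^e$, where $e$ is the ramification index of $\phi$ at $x$. This normal form is standard: starting from any local coordinate $z_0$ vanishing at $x$, one has $\phi^*(w) = z_0^e \cdot u(z_0)$ for some holomorphic unit $u$, and the substitution $z := z_0 \cdot u(z_0)^{1/e}$ (a valid new local coordinate, since a holomorphic $e$-th root of $u$ exists on a small disc because $u(0) \neq 0$) achieves $\phi^*(w) = z^e$ on the nose.

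Next, expand $\omega$ in the coordinate $w$ as $\omega = f(w)\,dw$ with Laurent expansion $f(w) = \sum_{n \geq -N} a_n w^n$, so that by definition $\Res_y(\omega) = a_{-1}$. Pulling back and differentiating yields
$$
\phi^*(\omega) \;=\; f(z^e)\, d(z^e) \;=\; \sum_{n \geq -N} e\, a_n\, z^{e(n+1)-1}\,dz.
$$
The residue at $x$ is the coefficient of $z^{-1}\,dz$ in this expansion, and the exponent $e(n+1)-1$ equals $-1$ precisely when $n=-1$. Extracting that single term gives $\Res_x(\phi^*\omega) = e\cdot a_{-1} = e\cdot \Res_y(\omega)$, which is exactly the claim.

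The only nontrivial ingredient is the existence of the local normal form $\phi^*(w) = z^e$; the remaining Laurent-series manipulation is bookkeeping. (Equivalently, one can appeal directly to the fact that for a finite map of smooth curves one has a local model $z \mapsto z^e$, which is a standard consequence of the holomorphic/formal inverse function theorem.) Since all other steps are routine, this local normal form is the only place where any care is required.
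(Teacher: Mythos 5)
Your proof is correct and follows essentially the same route as the paper: put $\phi$ in the local normal form $z \mapsto z^e$, write $\omega = f(w)\,dw$ as a Laurent series, pull back, and read off that the coefficient of $z^{-1}\,dz$ is $e\,a_{-1}$. The only difference is that you spell out why the normal form exists (extracting a holomorphic $e$-th root of the unit), a point the paper's proof takes for granted.
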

\begin{proof}
This is a local computation. On a local chart of $x$, $\phi$ sends $z$ to $z^e$, where $e$ is the ramification index of $\phi$ at $x$. If we write $\omega=f(z)dz$ on a local chart of $\phi(x)$, then $\phi^*(\omega)=f(z^e)d(z^{e})=e z^{e-1} f(z^e) dz$. Thus, if $f(z)=\sum a_n z^n$, then  
$e z^{e-1}f(z^e)=e \sum a_n z^{en+(e-1)}$. Therefore the residue of $\phi^*(\omega)$ at $x$ is $e a_{-1}$ and the claim follows because the residue of $\omega$ at $\phi(x)$ is $a_{-1}$.
\end{proof}

Using this lemma, we can compute the residues of $\cE_{M, N}^D$ inductively because two degeneracy maps on the complex points of modular curves are holomorphic (cf. Equation (\ref{eqn : alpha beta on h})). 
First, we prove the following.
\begin{lem}\label{lem : 5.7 new}
Let $d$ be a divisor of $N$, and let $\vect x d$ be a cusp of level $d$. Then, the residue of $\cE_{M, N}^D$ at a cusp $\vect x d $ is independent of $x \in \nabla(d)$. In other words, the residues of $\cE_{M, N}^D$ at all cusps of level $d$ are the same.
\end{lem}
\begin{proof}
If $N=p$ is prime, then the result follows because $X_0(p)$ has only one cusp of level $d$ for $d=1$ or $p$.
Let $\cE=\cE_{p, p}$. Then, by Lemma \ref{lem : 2.1 degeneracy maps on cusps} the ramification indices of the maps $\a_p(p)$ and $\b_p(p)$ at a cusp depend only on the level of the cusp. Thus, by Lemma \ref{lem : riemann surfaces}, the residues of $\a_p(p)^*(\cE)$ and $\b_p(p)^*(\cE)$ at a cusp $\vect{x}{p^a}$ depend only on $a$. Since $\cE_{1, p^2}=p \cdot \a_p(p)^*(\cE)-\b_p(p)^*(\cE)$ and $\cE_{p, p^2}^1=\a_p(p)^*(\cE)$, the result follows for the case $N=p^2$. Similarly, the result follows for the case $N=p^k$ for any $k\geq 3$.

If $N$ is not a prime power, choose a prime divisor $q$ of $N$ as in Lemma \ref{lem : 5.1}(3).
We use the same notation as in Lemma \ref{lem : 5.1}(3).
Let $\cE=\cE_{M', N'}^{D'}$ be an Eisenstein series of level $N'$. Assume further that the residue of $\cE$ at $\vect x d$ is independent of $x \in \nabla(d)$, so let $R_d$ denote the residue of $\cE$ at any cusp of level $d$. 
For each $t\geq 0$, we set $\g_1(t)=\a_q(N'q^t)$ and $\g_{-1}(t)=\b_q(N'q^t)$. 
For $\bd{e}=(e(r-1), \dots, e(0)) \in \{\pm 1 \}^r$, we define the map $\g_{\bd{e}} : X_0(N)=X_0(N'q^r) \to X_0(N')$ as
\begin{equation*}
\g_{\bd{e}} = \g_{e(r-1)}(r-1) \circ \cdots \circ \g_{e(0)}(0) : X_0(N'q^r) \to X_0(N'q^{r-1}) \to \cdots \to X_0(N').
\end{equation*}
Then, by Lemma \ref{lem : 2.1 degeneracy maps on cusps}, the ramification index of the map $\g_{\bd{e}}$ at a cusp $\vect x {dq^a}$ depends only on $a$ and it is a power of $q$.
So, let $q^k$ be the ramification index of the map $\g_{\bd{e}}$ at a cusp $\vect x {dq^a}$, which is independent of $x \in \nabla(dq^a)$. Then, by Lemma \ref{lem : riemann surfaces} the residue of $\g_{\bd{e}}^*(\cE)$ at a cusp $\vect x {dp^q}$ is $q^k \cdot R_d$, which is independent of $x \in \nabla(dq^a)$. Now, we define $\bd{e}(i) \in \{ \pm 1 \}^r$ by
 $\bd{e}(1)=(1, \dots, 1, 1)$, $\bd{e}(2)=(1, \dots, 1, -1)$, $\bd{e}(3)=(1, \dots, 1, -1, 1)$, and $\bd{e}(4)=(1, \dots, 1, -1, -1)$.
(Note that $\bd{e}(3)$ and $\bd{e}(4)$ are defined only for $r\geq 2$.)
Then, by the description of $\cE_{M, N}^D$ in Equation (\ref{eqn : ES alternative}), we have
\begin{equation}\label{eqn : 5.3}
\cE_{M, N}^D=\begin{cases}
\g_{\bd{e}(1)}^*(\cE)-\g_{\bd{e}(2)}^*(\cE) & \text{ if } a_q(M, N, D)=1, \\
q\cdot \g_{\bd{e}(1)}^*(\cE)-\g_{\bd{e}(2)}^*(\cE) & \text{ if } a_q(M, N, D)=q,\\
q\cdot\g_{\bd{e}(1)}^*(\cE)-q\cdot\g_{\bd{e}(2)}^*(\cE)-\g_{\bd{e}(3)}^*(\cE)+\g_{\bd{e}(4)}^*(\cE) & \text{ if } a_q(M, N, D)=0.
\end{cases}
\end{equation}
Thus, the residue of $\cE_{M, N}^D$ at a cusp $\vect x {dq^a}$ is independent of $x \in \nabla(dq^a)$ because that of each summand is.
Therefore by induction, the desired result follows.
\end{proof}

We introduce the following notation. For a prime $q$ and $0\leq a \leq r$, we set
\begin{equation*}
\cX_q(a, r):=\begin{cases}
q^{r-1}(q-1) & \text{ if } a =0 \\
q^{\fr(a)} (1-q) & \text{ if } a \geq 1,
\end{cases}
\quad
\cY_q(a, r):=\begin{cases}
q^{r-1}(q^2-1) & \text{ if } a =0 \\
0 & \text{ if } a \geq 1,
\end{cases}
\end{equation*}
and
\begin{equation*}
\cZ_q(a, r):=\begin{cases}
q^{r-2}(q^2-1)(q-1) & \text{ if } a =0 \\
q^{r-2}(1-q^2) & \text{ if } a =1\\
0 & \text{ if } a \geq 2.
\end{cases}
\end{equation*}
Here, we set $\fr(a)=\max \{0, r-2a\}$ as in Lemma \ref{lem : 4.2}. Note that 
\begin{equation}\label{eqn : XYZ comparing}
\cX_q(0, r) = \cX_q(r), ~~\cY_q(0, r)=\cY_q(r)~~\text{ and } \cZ_q(1, r) = -\cZ_q(r).
\end{equation}

\ms
Now, we are going to compute the residue of $\cE_{M, N}^D$ at any cusp.
\begin{lem}\label{lem : 5.8 new}
For a divisor $\d$ of $N$, let $a_p(\d)=\mathrm{val}_p(\d)$ and $e(p)=\mathrm{val}_p(N)$.
Then, the residue of $\cE_{M, N}^D$ at any cusp of level $\d$ is 
\begin{equation*}
\prod_{p \mid M} \cX_p(a_p(\d), e(p)) \cdot \prod_{p \mid \frac{N^{\rrad}}{MD}} \cY_p(a_p(\d), e(p)) \cdot \prod_{p \mid D } \cZ_p(a_p(\d), e(p)).
\end{equation*}
\end{lem}
\begin{proof}
For a divisor $\d$ of $N$, let $\Res_{\d}(\cE_{M, N}^D)$ be the residue of $\cE_{M, N}^D$ at any cusp of level $\d$. (By Lemma \ref{lem : 5.7 new} it is independent of a choice of a cusp of level $\d$.)

First, let $N=p$ be prime. Then, the residues of $\cE_{p, p}$ at $\vect 1 1 =0$ and at $\vect 1 p =i\infty$ are $p-1$ and $1-p$, respectively (cf. \cite[chap II, \textsection 5]{M77}). 
Since the $\a_p(p^t)$ is ramified at a cusp $\vect x {p^a}$ (of ramification index $p$) if and only if $a\leq t/2$ by Lemma \ref{lem : 2.1 degeneracy maps on cusps}, the residue of $\a_p(p)^*(\cE_{p, p})$ at any cusp of level $1$ (resp. $p$ and $p^2$) is 
$p(p-1)$ (resp. $1-p$ and $1-p$). Similarly, the residue of $\b_p(p)^*(\cE_{p, p})$ at any cusp of level $1$ (resp. $p$ and $p^2$) is $p-1$ (resp. $p-1$ and $p(1-p)$). Therefore we have
\begin{equation*}
\Res_{p^a}(\cE_{p, p^2}^1)=\cX_p(a, 2) \qa \Res_{p^a}(\cE_{1, p^2})=\cZ_p(a, 2).
\end{equation*}
Analogously (via the maps $\a_p(p^t)$ for $t\geq 2$), for $r\geq 2$ we have
\begin{equation*}
\Res_{p^a}(\cE_{p, p^r}^1)=\cX_p(a, r) \qa \Res_{p^a}(\cE_{1, p^r})=\cZ_p(a, r).
\end{equation*}

Next, assume that $N$ is not a prime power and choose $q$ as in Lemma \ref{lem : 5.1}(3). 
We use the same notation as in Lemma \ref{lem : 5.1}(3). Let $\cE=\cE_{M', N'}^{D'}$. For a divisor $d$ of $N'$, let $R_d=\Res_{d}(\cE)$.
We also use the notation in the proof of Lemma \ref{lem : 5.7 new}. 
Let $q^{r(i, a)}$ be the ramification index of $\g_{\bd{e}(i)}$ at any cusp of level $dq^a$ over some cusp of level $d$ of $X_0(N')$.
By Lemma \ref{lem : 2.1 degeneracy maps on cusps}, we have
\begin{equation*}
\begin{cases}
r(1, a)=\fr(a) & \text{ for any } r\geq 1, \\
r(2, 0)=r-1 \text{ and } r(2, a)=\fr(a)+1 &\text{ for any } a \geq 1 \text{ and } r\geq 1,\\
r(3, 0)=r-1 \text{ and } r(3, a)=\fr(a)+1 &\text{ for any } a \geq 1 \text{ and } r\geq 2,\\
r(4, 0)=r-2, ~r(4, 1)=\fr(1) \text{ and } r(4, a)=\fr(a)+2 &\text{ for any } a \geq 2 \text{ and } r\geq 2.
\end{cases}
\end{equation*}
By Lemma \ref{lem : riemann surfaces} the residue of $\g_{\bd{e}(i)}^*(\cE)$ at any cusp of level $dq^a$ is $q^{r(i, a)} \cdot R_d$. 
By Equation (\ref{eqn : 5.3}), we have
\begin{equation*}
\Res_{dq^a}(\cE_{M, N}^D)=\begin{cases}
\cX_q(a, r)\cdot R_d & \text{ if } a_q(M, N, D)=1, \\
\cY_q(a, r)\cdot R_d & \text{ if } a_q(M, N, D)=q, \\
\cZ_q(a, r)\cdot R_d & \text{ if } a_q(M, N, D)=0.
\end{cases}
\end{equation*}
This computation proves the lemma by induction.  
\end{proof}

\begin{rem} \label{rem : 5.9 new}
If $(M', N')\not\in \fS_0(N')$, then the sum of the residues of $\cE_{M', N'}^{D'}$ at all cusps is not zero, so $\cE_{M', N'}^{D'}$ cannot be a modular form on $X_0(N')$. However, the above computation still works ``formally'' (cf. Remark \ref{rem : 5.3}).
In other words, Lemma \ref{lem : 5.8 new} also holds true even for $N'=1$, by setting the residue of $\cE_{1, 1}^1$ at $\vect 1 1 =i\infty$ is $1$.
\end{rem}

Now, Theorem \ref{thm : residue} easily follows by taking $d=N$, $D$ or $\ell D$. Indeed, if $M\neq N^{\rrad}$, then there is a prime divisor $p$ of $N$ such that $a_p(M, N, D)=p$ or $0$. If $a_p(M, N, D)=0$ then $e(p) \geq 2$. Thus, either $\cY_p(e(p), e(p))=0$ or $\cZ_p(e(p), e(p))=0$. Therefore the residue of $\cE_{M, N}^D$ at the cusp $i\infty=\vect 1 N$ is $0$. If $M=N^{\rrad}$, then $\cX_p(e(p), e(p))=1-p$ and hence the residue of $\cE_{M, N}^D$ at $i\infty$ is $\prod_{p \mid N} (1-p)$. If $d=D$, then $a_p(d)=1$ for all prime divisors $p$ of $D$, and $a_p(d)=0$ otherwise. Thus, the first term is $\prod \cX_p(0, e(p))$, the second is $\prod \cY_p(0, e(p))$ and the last is $\prod \cZ_p(1, e(p))$, which proves the result by Equation (\ref{eqn : XYZ comparing}). If $d=\ell D$ then the first term is $\prod_{p \neq \ell} \cX_p(0, e(p)) \cdot \cX_\ell(1, 1)=-\prod_{p \mid M}\cX_p(0, e(p))$ and the other terms are the same as the case $d=D$. Thus, the result follows.
\end{proof}

\ms
\section{The index of $\cI_{M, N}^D$}\label{sec : the index}
In this section, we prove the following theorem.
\begin{thm}\label{thm : the index}
Let $(M, D) \in \fS_0(N)$ and 
let $\ell$ be a prime not dividing $N^\sqq$. If $\ell \geq 3$, then  
$$
\T(N)/{\cI_{M, N}^D} \otimes_{\Z} \Z_{\ell} \simeq (\zmod n) \otimes _{\Z} \Z_{\ell},
$$
where $n$ is the order of $\cC_{M, N}^D$.
If $\ell=2$, we suppose further that $N^{\rrad}/M$ is an odd integer greater than $1$.  Then, we have
$$
\T(N)/{\cI_{M, N}^D} \otimes_{\Z} \Z_{2} \simeq (\zmod n) \otimes _{\Z} \Z_{2}.
$$
\end{thm}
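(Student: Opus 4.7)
The plan is the standard two-sided index computation: a lower bound on $[\T(N):\cI_{M,N}^D]$ via the cuspidal divisor $\cC_{M,N}^D$, and a matching upper bound via the residues of the Eisenstein series $\cE_{M,N}^D$. First, I observe that $\T(N)/\cI_{M,N}^D$ is a cyclic $\Z$-module: by Definition \ref{defn : cI_M, N}, $\cI_{M,N}^D$ contains $T_p - \epsilon(p)$ for every prime $p$, with $\epsilon(p) \in \{0, 1, p, p+1\}$, and the $T_p$ generate $\T(N)$ as a ring over $\Z$. Hence $\T(N)/\cI_{M,N}^D \simeq \Z/m\Z$ for some integer $m$, and the theorem reduces to showing $m = n$ after $\ell$-adic completion.

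For the lower bound, Theorem \ref{thm : hecke action on C_M, N} says that $\cI_{M,N}^D$ annihilates $\cC_{M,N}^D$, which has exact order $n$ by Theorem \ref{thm : order of C_M, N} and Corollary \ref{cor : the order of C_M, N}. The $\T(N)$-equivariant surjection $\T(N) \twoheadrightarrow \langle \cC_{M,N}^D \rangle \simeq \Z/n\Z$, $T \mapsto T \cdot \cC_{M,N}^D$, therefore factors through $\T(N)/\cI_{M,N}^D$, forcing $n \mid m$.

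For the upper bound, I would follow the Mazur--Ohta strategy and pass to the full Hecke algebra $\widetilde\T(N)$ acting on $M_2(\Gamma_0(N);\Z_\ell)$. Definition \ref{defn : eisenstein series} combined with Proposition \ref{prop : the maps [p]} shows that $\cE_{M,N}^D$ is a $\widetilde\T(N)$-eigenform with $T_p$-eigenvalue $\epsilon(p)$, so it is annihilated by the natural lift of $\cI_{M,N}^D$ to $\widetilde\T(N)$. Using the $q$-expansion principle together with the perfect pairing between $\widetilde\T(N)\otimes\Z_\ell$ and the $\Z_\ell$-lattice of modular forms with integer Fourier expansions, the residues of $\cE_{M,N}^D$ at the various cusps computed in Theorem \ref{thm : residue} translate into a relation of the form $n \cdot 1 \in \cI_{M,N}^D + \ker(\widetilde\T(N) \twoheadrightarrow \T(N))$. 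Projecting to $\T(N)$ then yields $n \cdot 1 \in \cI_{M,N}^D \otimes \Z_\ell$, so $m \mid n$.

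The hard part will be the upper bound, specifically the $\ell$-adic sharpness of the residue-to-ideal translation. One has to verify that the passage from $\widetilde\T(N)$ to $\T(N)$ does not lose $\ell$-adic information, and that the residue formula in Theorem \ref{thm : residue} is $\ell$-adically optimal. This is exactly what the hypothesis $\ell^2 \nmid 4N$ buys: at primes $\ell$ with $\ell^2 \mid N$, the N\'eron model of $J_0(N)$ at $\ell$ is delicate and can introduce spurious factors. The additional restriction at $\ell=2$ (requiring $N^\cR/M$ odd and greater than $1$) corresponds precisely to the sporadic cases excluded in Theorem \ref{thm : order of C_M, N}, where $\cC_{M,N}^D$ picks up a factor of $2$ in its order that is not matched by the $2$-adic valuation of the residues of $\cE_{M,N}^D$.
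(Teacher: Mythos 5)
Your lower bound ($n \mid m$ via the cyclicity of $\T(N)/\cI_{M,N}^D$ and the annihilation of $\cC_{M,N}^D$) is exactly the paper's argument. The gap is in the upper bound, where your sketch assumes precisely the hard step and, more importantly, cannot be carried out uniformly. The "perfect pairing / residue-to-ideal translation" you invoke is not established and is delicate exactly when $\ell \mid N$. What the paper actually does in the corresponding case is concrete: it takes the cusp form $f$ over $\T(N)/\cI$ with $\cI=(\ell^{a(\ell)},\cI_{M,N}^D)$, forms $g = 24f + \cE_{M,N}^D \pmod{24\ell^{a(\ell)}}$, whose $q$-expansion at $\infty$ vanishes, and applies the Katz $q$-expansion principle on $X_0(N)_{/\F_\ell}$. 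When $\ell \mid\mid N$ the special fiber has \emph{two} components (Deligne--Rapoport), so the vanishing of $g$ is only guaranteed on the component containing $\infty=\vect{1}{N}$, and one must check that the cusp $\vect{1}{ML}$ --- where Theorem \ref{thm : residue} gives residue $\pm 24\,\cN_{M,N}^D$ up to $\ell$-adic units --- lies on that same component; this is where the hypotheses $\ell^2 \nmid N$ and $\ell \nmid N^\cR/M$ enter, and your proposal never addresses it (your appeal to the N\'eron model of $J_0(N)$ is not what is used).

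Second, a single residue argument cannot cover all cases, and the paper needs two further arguments that are absent from your plan. (a) If $\ell \mid N^\sqf/M$ (so $T_\ell-\ell\in\cI$ and $\ell \mid\mid N$), the residue comparison at level $N$ is not available; the paper shows instead that $\m=(\ell,\cI)$ is not $\ell$-new and proves $\T(N)/\cI \simeq \T(N/\ell)/(\ell^{a(\ell)},\cI_{M,N/\ell}^D)$, reducing the bound to level $N/\ell$. (b) If $M=N^\cR$, the Eisenstein series $\cE_{M,N}^D$ has nonzero constant term at $\infty$ (Theorem \ref{thm : residue}), so the $q$-expansion trick fails outright; the paper proves a separate lemma giving $\T(Mp^r)/\cI_{Mp,Mp^r}^{p}\otimes\Z_\ell \simeq \T(Mp)/\cI_{Mp,Mp}\otimes\Z_\ell$ (via $T_p\notin\m$, completion at $\m$, and the $p$-old Hecke algebra) and induces down to the square-free case of \cite{Yoo3}, which also requires $\ell$ odd. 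This case division, not the factor $h=2$ in the order of $\cC_{M,N}^D$, is what the extra hypothesis at $\ell=2$ (that $N^\cR/M$ be odd and $>1$) is for: it forces the argument into the residue case, since cases (a) and (b) are unavailable at $\ell=2$. As written, your proposal has no mechanism for (a) and (b), so the upper bound $m \mid n$ is not proved.
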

\begin{proof}
If $N$ is squarefree, this is done by \cite[Th. 1.4]{Yoo3}. Thus, we assume that $N^{\sqq} \neq 1$. 
We basically follow the argument in Section 5 of \textit{loc. cit.}

First, by the same argument as in \cite[chap II, Prop. 9.7]{M77}, $\T(N)/{\cI_{M, N}^D} \simeq \zmod m$ for some $m \geq 1$. More specifically, the composition $\Z \inj \T(N) \surj \T(N)/{\cI_{M, N}^D}$
is surjective because all the Hecke operators are congruent to integers modulo $\cI_{M, N}^D$. Using the duality between cusp forms and the Hecke ring, there is a cuspidal eigenform $f$ of weight $2$ and level $N$ whose $q$-expansion is $\sum_{n\geq 1} (T_n ~\mod {\cI_{M, N}^D})\cdot q^n$. If the above composition is injective, then  $\T(N)/{\cI_{M, N}^D} \simeq \Z$ and hence 
the eigenvalue of $T_p$ of $f$ is $1+p$, which is bigger than $2\sqrt{p}$ if $p$ is large enough. This is a contradiction because it violates the Ramanujan-Petersson bound. Therefore we have $\T(N)/{\cI_{M, N}^D} \simeq \zmod m$ for some $m\geq 1$.

Next, since $\br {\cC_{M, N}^D}$ is annihilated by $\cI_{M, N}^D$ by Theorem \ref{thm : hecke action on C_M, N}, there is a natural surjection
$$
\zmod m \simeq \T(N)/{\cI_{M, N}^D} \twoheadrightarrow \End (\br {\cC_{M, N}^D}) \simeq \zmod n.
$$
Therefore $n$ divides $m$.

Last, let $\ell^{a(\ell)}$ and $\ell^{b(\ell)}$ be the exact powers of $\ell$ dividing $m$ and $n$, respectively. Let $\ell$ be a prime satisfying all above assumptions above. 
Then, it suffices to prove that $a(\ell) \leq b(\ell)$.
If $a(\ell)=0$, then nothing to prove and hence we further assume that $a(\ell)\geq 1$. 
Let $f$ be a cusp form of weight $2$ for $\Gamma_0(N)$ over $\T(N)/\cI \simeq \zmod {\ell^{a(\ell)}}$ whose $q$-expansion is 
$$
\sum_{n\geq 1} (T_n ~\mod {~\cI}) \cdot q^n,
$$
where $\cI:=(\ell^{a(\ell)}, ~\cI_{M, N}^D)$. We use Theorem \ref{thm : residue}.

\begin{itemize}
\item \textbf{Case 1 :}
$M=N^{\rrad}$. Since we assume that $N^{\rrad}/M>1$ if $\ell=2$, this case cannot occur for $\ell=2$. Therefore we further assume that $\ell$ is an odd prime. By our assumption, for any prime divisor $p$ of $N^{\sqq}$, we have $\ell\neq p$ and hence we can use Lemma \ref{lem : 6.2} below. Applying it for all prime divisors $p$ of $N^{\sqq}$, we then obtain
$$
\T(N)/{\cI_{M, N}^{1}} \otimes \Z_{\ell} \simeq \T(M)/{\cI_{M, M}^1} \otimes \Z_\ell.
$$
Note that the order of $\cC_{M, N}^{1}$ is equal to the order of $\cC_{M, M}^1$ by Corollary \ref{cor : the order of C_M, N}. Therefore by \cite[Th. 1.4]{Yoo3} we have $\T(M)/{\cI_{M, M}^1} \otimes \Z_\ell \simeq \zmod {\ell^{b(\ell)}}$, which implies $a(\ell)=b(\ell)$, as desired.

\item \textbf{Case 2 :} $M\neq N^{\rrad}$ and $\ell$ does not divide $N^{\rrad}/M$.
Then the $q$-expansion of 
$$
g=24f+\cE_{M, N}^D \pmod {24\ell^{a(\ell)}}
$$ 
is zero (Theorem \ref{thm : residue}). Therefore by the $q$-expansion principle \cite[\textsection 1.6]{Ka73}, $g$ is identically zero on the connected component $C$ of $X_0(N)_{\F_\ell}$ containing the cusp $i\infty=\vect 1 N$. Hence the residue of $\cE_{M, N}^D$ at any cusp contained in the component $C$ must be divisible by $24\ell^{a(\ell)}$ because $24f$ is a cusp form. By our assumption, either $\ell$ does not divide $N$, or $\ell$ divides $M$ but it does not divide $N^{\sqq}$.
\begin{itemize}
\item
If $\ell$ does not divide $N$, then $X_0(N)_{\F_\ell}$ is connected. So, every cusp belongs to the component $C$.
\item
If $\ell$ divides $M$ but it does not divide $N^{\sqq}$, then $X_0(N)_{\F_\ell}$ has two components, and all the cusps of level $d\ell$ with $(d, \ell)=1$ belong to the component\footnote{In the proof of Lemma \ref{lem : 2.1 degeneracy maps on cusps}, we show that if $p$ exactly divide $N$, the Atkin-Lehner involution $w_p$ on $X_0(N)$ swaps the cusps of level $d$ and those of level $dp$. The operator $w_p$ also swaps two components as well.}  $C$ (cf. \cite[Ch. V]{DR73}).
\end{itemize}
Thus, in both cases the cusp $\vect 1 {\ell D}$ belongs to the component $C$. 
Since, the residue of $\cE^D_{M, N}$ at the cusp $\vect 1 {\ell D}$ is $(-1)^{\omega(D)+1} \cdot \cN(M, N, D)$ by Theorem \ref{thm : residue}, $24\ell^{a(\ell)}$ divides $\cN(M, N, D)$. In other words, $\ell^{a(\ell)}$ divides (the numerator of) $\frac{\cN(M, N, D)}{24}$. Note that by our assumption $h(M, N, D)=1$ and the exact powers of $\ell$ dividing $\cN(M, N, D)$ and $\cN(M, N/B, D)$ are the same, where $B=\prod_{p \mid \gcd(M, N^{\sqq})} p^{e(p)-1}$ and $e(p)=\mathrm{val}_p(N)$ as in Corollary \ref{cor : the order of C_M, N}.
Thus, the result follows by Theorem \ref{thm : order of C_M, N} and Corollary \ref{cor : the order of C_M, N}.

\item \textbf{Case 3 :} $\ell$ divides $N^\sqf$ but it does not divide $2M$. Then, $T_\ell- \ell \in \cI$ and $\ell$ exactly divides $N$. Thus, $\fm:=(\ell,~\cI)$ is not $\ell$-new by Lemma \ref{lem:p^2 U_p eigenvalue}.
By the same argument as in the proof of Theorem 5.2 in \cite{Yoo3}, we have
$$
\T(N)/{\cI} \simeq \T(N/\ell)/{(\ell^{a(\ell)}, ~\cI_{M, N/\ell}^D)} \twoheadleftarrow \T(N/\ell)/{\cI_{M, N/\ell}^D}.
$$
Since $\ell$ does not divide $\ell^2-1$, the exact powers of $\ell$ dividing the orders of $\cC_{M, N}^D$ and $\cC_{M, N/\ell}^D$ are the same. Since $\ell$ does not divide $2(N/\ell)$, by Cases 1 and 2 above we have
$\T(N/\ell)/{\cI_{M, N/\ell}^D} \otimes \Z_\ell \simeq \zmod {\ell^{b(\ell)}}$, and therefore $a(\ell)\leq b(\ell)$.

\end{itemize} 
\end{proof}

\begin{lem}\label{lem : 6.2}
Let $p$ be a prime divisor of $N^{\sqq}$ and let $r=\mathrm{val}_p(N)\geq 2$. 
Let $M=N^{\rrad}$ and $D=1$.
Then, 
for a prime $\ell$ different from $p$, we have an isomorphism 
$$
\T(N)/{\cI_{M, N}^1} \otimes \Z_\ell \simeq \T(N/{p^{r-1}})/{\cI_{M, N/{p^{r-1}}}^1} \otimes \Z_\ell.
$$
\end{lem}
\begin{proof}
For ease of notation, let $\T=\T(N)$, $\T^0=\T^{p\hyp\old}$ and $\T^1=\T(N/p)$. 
Let $\cI=\cI_{M, N}^1$ and $\cJ=\cI_{M, N/p}^{1}$. Let $\fm=(\ell, ~\cI)$ and $\fn =(\ell, ~\cJ)$.
Let $\T_\ell=\T \otimes \Z_\ell$ and $\T_{\fa}$ the completion of $\T$ at a maximal ideal $\fa$, i.e.,  
$\T_{\fa} = \lim\limits_{\leftarrow n} \T/{\fa^n}$. 

First, let $R$ be the common subring of $\T^0$ and $\T^1$ as in Section \ref{sec:old and new}.
By the matrix relation (\ref{eqn : matrix relation}), the first component
$\alpha_p(N/p)^*(J_0(N/p))$ is a $\T^0$-submodule of $J_0(N)_{p\hyp\old}$, and hence by the restriction, $T_p$ and $T_n$ in $\T^0$ map to $\tau_p$ and $T_n$ in $\T^1$, respectively. (Here, $n$ is a positive integer prime to $p$.)
Since $\alpha_p(N/p)^*$ is injective, we have a natural surjection
$\T^0 \twoheadrightarrow \T^1$. Since $T_p \not\in \fm$, we have 
$\T^0_{\fm} \simeq \T^1_{\fn}$.

Next, since $\T_\ell$ and $\T^1_\ell$ are ($\ell$-adically) complete and semi-local, they decompose into local rings \cite[Prop. 5]{Na50}:
$$
\T_\ell \simeq \prod_{\substack{\ell \in \fa \\ ~\fa \text{ maximal ideal }}} \T_{\fa} \quad\text{and}\quad 
\T^1_\ell \simeq \prod_{\substack{\ell \in \fb \\ ~\fb \text{ maximal ideal }}} \T^1_{\fb}.
$$
Therefore, we have
$$
\T_\ell/\cI \simeq \prod_{\substack{\ell \in \fa \text{ and } \cI \subseteq \fa \\ \fa \text{ maximal ideal }}} \T_{\fa}/\cI =\T_{\fm}/\cI \quad\text{and}\quad
\T^1_\ell/\cJ \simeq \prod_{\substack{\ell \in \fb \text{ and } \cJ \subseteq \fb \\ \fb \text{ maximal ideal }}} \T^1_{\fb}/\cJ = \T^1_{\fn}/\cJ
$$
because $\fm=(\ell,~\cI)$ and $\fn=(\ell,~\cJ)$.

Last, since $T_p \not\in \fm$, $\fm$ is not $p$-new by Lemma \ref{lem:p^2 U_p eigenvalue}. 
Thus, $\T_\fm \simeq \T^0_\fm$, and the latter is isomorphic to $\T^1_{\fn}$. By taking quotients, we have the following.
\small
$$
\T(N)/\cI_{M, N}^1 \otimes \Z_\ell \simeq \T_\ell/\cI \simeq \T_\fm/\cI \simeq \T^0_\fm/\cI \simeq  \T^1_{\fn}/\cJ\simeq \T^1_{\ell}/\cJ \simeq \T(N/p)/\cI_{M, N/p}^{1} \otimes \Z_\ell.
$$
\normalsize
Since this is true for any $r\geq 2$, we have 
$$
\T(N)/\cI_{M, N}^1 \otimes \Z_\ell \simeq \T(N/p)/\cI_{M, N/p}^{1} \otimes \Z_\ell \simeq \cdots \simeq \T(N/{p^{r-1}})/{\cI_{M, N/{p^{r-1}}}^1}\otimes \Z_\ell,
$$
as claimed.
\end{proof}

\bibliographystyle{annotation}

\end{document}